\documentclass[10pt]{amsart}
\usepackage{amsmath, amssymb, latexsym}
\usepackage{graphics}
\usepackage{mathrsfs}
\usepackage[all, knot]{xy}
\usepackage[vcentermath,enableskew]{youngtab}
\xyoption{arc}

\setlength{\textwidth}{15.5cm} \setlength{\textheight}{20cm}
\setlength{\oddsidemargin}{0.0cm} \setlength{\evensidemargin}{0.0cm}

\newtheorem{theorem}{Theorem}[section]
\newtheorem{proposition}[theorem]{Proposition}
\newtheorem{corollary}[theorem]{Corollary}
\newtheorem{conjecture}[theorem]{Conjecture}

\theoremstyle{definition}
\newtheorem{definition}[theorem]{Definition}
\newtheorem{example}[theorem]{Example}
\newtheorem{remark}[theorem]{Remark}

\numberwithin{equation}{section}

\def\K{\mathcal K}
\def\F{\mathcal F}
\def\S{\mathcal S}

\def\g{\mathfrak g}
\def\h{\mathfrak h}
\def\Z{\mathbb Z}
\def\A{\mathbb A}
\def\C{\mathbb C}
\def\Hom{{\rm Hom}}
\def\wt{{\rm wt}}
\def\het{{\rm ht}}
\def\ch{{\rm ch}}
\def\dim{{\rm dim}}
\def\vch{\overset{\circ}{\chi}}


\newcommand{\DynkinDn}
{
\xy
{\ar@{=>} (9.5,0)*{}; (0.5,0)*{}};
(10.5,0)*{};(15,0)*{} **\dir{-};
(15,0)*{};(30,0)*{} **\dir{.};
(30,0)*{};(34.5,0)*{} **\dir{-};
{\ar@{=>} (35.5,0)*{}; (44.5,0)*{}};
(0,0)*{\circ}; (10,0)*{\circ};
(35,0)*{\circ}; (45,0)*{\circ};
(0,-3)*{0}; (10,-3)*{1};
(34,-3)*{n-1};(45,-3)*{n};
\endxy
}
\newcommand{\DynkinA}
{
\xy
{\ar@{<=>} (9.5,0)*{}; (0.5,0)*{}};
(0,0)*{\circ}; (10,0)*{\circ};
(0,-3)*{0}; (10,-3)*{1};
\endxy
}

\newcommand{\DynkinDt}
{
\xy
{\ar@{=>} (9.5,0)*{}; (0.5,0)*{}};
{\ar@{=>} (10.5,0)*{}; (19.5,0)*{}};
(0,0)*{\circ}; (10,0)*{\circ}; (20,0)*{\circ};
(0,-3)*{0}; (10,-3)*{1};  (20,-3)*{2};
\endxy
}

\newcommand{\DynkinDf}
{
\xy
{\ar@{=>} (9.5,0)*{}; (0.5,0)*{}};
{\ar@{=>} (20.5,0)*{}; (29.5,0)*{}};
(10.5,0); (19.5,0) **\dir{-};
(0,0)*{\circ}; (10,0)*{\circ}; (20,0)*{\circ}; (30,0)*{\circ};
(0,-3)*{0}; (10,-3)*{1}; (20,-3)*{2}; (30,-3)*{3};
\endxy
}
\newcommand{\ublock}[1]
{\fontsize{7}{7}\selectfont
\xy (0,2)*{}="T1"; (6,2)*{}="T2"; (4,4)*{}="T3"; (10,4)*{}="T4"; 
(0,-4)*{}="B1"; (6,-4)*{}="B2"; (4,-2)*{}="B3"; (10,-2)*{}="B4"; 
"T1"; "T2" **\dir{-};"T1"; "T3" **\dir{-};"T2"; "T4" **\dir{-};"T3"; "T4" **\dir{-};
"T1"; "B1" **\dir{-};"T2"; "B2" **\dir{-};"T4"; "B4" **\dir{-};
"B1"; "B2" **\dir{-};"B2"; "B4" **\dir{-};
(3.3,-0.7)*{#1};
\endxy
\fontsize{10}{10}\selectfont}

\newcommand{\hhblock}[1]
{
\fontsize{7}{7}\selectfont
\xy (0,1)*{}="T1"; (6,1)*{}="T2"; (4,3)*{}="T3"; (10,3)*{}="T4"; 
(0,-2)*{}="B1"; (6,-2)*{}="B2"; (4,0)*{}="B3"; (10,0)*{}="B4"; 
"T1"; "T2" **\dir{-};"T1"; "T3" **\dir{-};"T2"; "T4" **\dir{-};"T3"; "T4" **\dir{-};
"T1"; "B1" **\dir{-};"T2"; "B2" **\dir{-};"T4"; "B4" **\dir{-};
"B1"; "B2" **\dir{-};"B2"; "B4" **\dir{-};
(3,-0.5)*{#1}; 
\endxy
\fontsize{10}{10}\selectfont}

\newcommand{\patterno}
{
\fontsize{7}{7}\selectfont
\xy (-29,3)*{}="TT4"; (-33,1)*{}="TT1"; (0,1)*{}="T1"; (6,1)*{}="T2"; (4,3)*{}="T3"; (10,3)*{}="T4"; 
(-33,-2)*{}="BB1"; (0,-2)*{}="B1"; (6,-2)*{}="B2"; (4,0)*{}="B3"; (10,0)*{}="B4"; 
"TT1"; "T2" **\dir{-};"TT4"; "T4" **\dir{-};
"B4"; "T4" **\dir{-};"B2"; "T2" **\dir{-};"B1"; "T1" **\dir{-};
"T1"; "T3" **\dir{-};"T2"; "T4" **\dir{-};"T3"; "T4" **\dir{-};
"T1"+(-6,0); "T3"+(-6,0); **\dir{-};"B1"+(-6,0); "T1"+(-6,0); **\dir{-};
"T1"+(-12,0); "T3"+(-12,0); **\dir{-};"B1"+(-12,0); "T1"+(-12,0); **\dir{-};
"T1"+(-18,0); "T3"+(-18,0); **\dir{-};"B1"+(-18,0); "T1"+(-18,0); **\dir{-};
"T1"+(-24,0); "T3"+(-24,0); **\dir{-};"B1"+(-24,0); "T1"+(-24,0); **\dir{-};
"BB1"; "B2" **\dir{-};"B2"; "B4" **\dir{-};
(3,-0.5)*{0}; (-3,-0.5)*{0}; (-9,-0.5)*{0}; (-15,-0.5)*{0}; (-21,-0.5)*{0}; 
(10,69)*{}="H1";
"T4"; "H1" **\dir{-}; "T4"+(-6,0); "H1"+(-6,0) **\dir{-};
"T4"+(-12,0); "H1"+(-12,0) **\dir{-}; "T4"+(-18,0); "H1"+(-18,0) **\dir{-};
"T4"+(-24,0); "H1"+(-24,0) **\dir{-}; "T4"+(-30,0); "H1"+(-30,0) **\dir{-};
"TT4"+(0,3); "T4"+(0,3) **\dir{-};
"TT4"+(0,9); "T4"+(0,9) **\dir{-};
"TT4"+(0,19); "T4"+(0,19) **\dir{-};"TT4"+(0,25); "T4"+(0,25) **\dir{-};
"TT4"+(0,28); "T4"+(0,28) **\dir{-};"TT4"+(0,31); "T4"+(0,31) **\dir{-};
"TT4"+(0,37); "T4"+(0,37) **\dir{-};"TT4"+(0,47); "T4"+(0,47) **\dir{-};
"TT4"+(0,53); "T4"+(0,53) **\dir{-};"TT4"+(0,56); "T4"+(0,56) **\dir{-};
"TT4"+(0,59); "T4"+(0,59) **\dir{-};"TT4"+(0,65); "T4"+(0,65) **\dir{-};
(-28,-0.5)*{\cdots};
(7,4.5)*{0}; (1,4.5)*{0}; (-5,4.5)*{0}; (-11,4.5)*{0}; (-17,4.5)*{0}; (-24,4.5)*{\cdots}; 
(7,9.1)*{1}; (1,9.1)*{1}; (-5,9.1)*{1}; (-11,9.1)*{1}; (-17,9.1)*{1}; (-24,9.1)*{\cdots};
(7,18.1)*{\vdots}; (1,18.1)*{\vdots}; (-5,18.1)*{\vdots}; (-11,18.1)*{\vdots}; (-17,18.1)*{\vdots}; 
(-24,18.1)*{\cdots};
(5.8,25.1)*{n};(7.4,25.1)*{-};(8.8,25.1)*{1}; 
(-0.2,25.1)*{n};(1.4,25.1)*{-};(2.8,25.1)*{1};
(-6.2,25.1)*{n};(-4.6,25.1)*{-};(-3.2,25.1)*{1};
(-12.2,25.1)*{n};(-10.6,25.1)*{-};(-9.2,25.1)*{1};
(-18.2,25.1)*{n};(-16.6,25.1)*{-};(-15.2,25.1)*{1}; (-24,25.1)*{\cdots};
(7,29.4)*{n}; (1,29.4)*{n}; (-5,29.4)*{n}; (-11,29.4)*{n}; (-17,29.4)*{n}; (-24,29.4)*{\cdots}; 
(7,32.4)*{n}; (1,32.4)*{n}; (-5,32.4)*{n}; (-11,32.4)*{n}; (-17,32.4)*{n}; (-24,32.4)*{\cdots};
(5.8,36.7)*{n};(7.4,36.7)*{-};(8.8,36.7)*{1}; 
(-0.2,36.7)*{n};(1.4,36.7)*{-};(2.8,36.7)*{1};
(-6.2,36.7)*{n};(-4.6,36.7)*{-};(-3.2,36.7)*{1};
(-12.2,36.7)*{n};(-10.6,36.7)*{-};(-9.2,36.7)*{1};
(-18.2,36.7)*{n};(-16.6,36.7)*{-};(-15.2,36.7)*{1}; (-24,36.7)*{\cdots};
(7,45.7)*{\vdots}; (1,45.7)*{\vdots}; (-5,45.7)*{\vdots}; (-11,45.7)*{\vdots}; (-17,45.7)*{\vdots}; 
(-24,45.7)*{\cdots};
(7,52.7)*{1}; (1,52.7)*{1}; (-5,52.7)*{1}; (-11,52.7)*{1}; (-17,52.7)*{1}; (-24,52.7)*{\cdots};
(7,57.1)*{0}; (1,57.1)*{0}; (-5,57.1)*{0}; (-11,57.1)*{0}; (-17,57.1)*{0}; (-24,57.1)*{\cdots};
(7,60.1)*{0}; (1,60.1)*{0}; (-5,60.1)*{0}; (-11,60.1)*{0}; (-17,60.1)*{0}; (-24,60.1)*{\cdots};
(7,64.7)*{1}; (1,64.7)*{1}; (-5,64.7)*{1}; (-11,64.7)*{1}; (-17,64.7)*{1}; (-24,64.7)*{\cdots};
\endxy
\fontsize{10}{10}\selectfont}

\newcommand{\TTBK}[1]
{\xy (0,6)*{}="T1"; (6,6)*{}="T2"; (0,0)*{}="B1"; (6,0)*{}="B2"; 
"T1"; "T2" **\dir{-};"T1"; "B1" **\dir{-};"T2"; "B2" **\dir{-};(2.8,2.3)*{#1};\endxy}

\newcommand{\TLBK}[2]
{\fontsize{7}{7}\selectfont
\xy(0,6)*{}="T1"; (6,6)*{}="T2";(0,3)*{}="M1"; (6,3)*{}="M2";(0,0)*{}="B1"; (6,0)*{}="B2"; 
"T1"; "T2" **\dir{-};"T1"; "B1" **\dir{-};"M1"; "M2" **\dir{-};"T2"; "B2" **\dir{-};"B1"; "B2" **\dir{-};
(2.8,1.2)*{#1}; (2.8,4.2)*{#2};\endxy
\fontsize{10}{10}\selectfont}

\newcommand{\FLBK}[2]
{\fontsize{7}{7}\selectfont
\xy(0,6)*{}="T1"; (6,6)*{}="T2";(0,3)*{}="M1"; (6,3)*{}="M2";(0,0)*{}="B1"; (6,0)*{}="B2"; 
"T1"; "T2" **\dir{-};"T1"; "B1" **\dir{-};"M1"; "M2" **\dir{-};"T2"; "B2" **\dir{-};"B1"; "B2" **\dir{-};
(2.8,1.2)*{#1}; (2.8,4.2)*{#2};\endxy
\fontsize{10}{10}\selectfont}

\newcommand{\TGLBK}[2]
{\fontsize{7}{7}\selectfont
\xy(0,6)*{}="T1"; (6,6)*{}="T2";(0,3)*{}="M1"; (6,3)*{}="M2";(0,0)*{}="B1"; (6,0)*{}="B2"; 
"T1"; "T2" **\dir{-};"T1"; "B1" **\dir{-};"M1"; "M2" **\dir{-};"T2"; "B2" **\dir{-};"B1"; "B2" **\dir{-};
"M1"; "B2" **\dir{-};"M2"; "B1" **\dir{-};
(2.8,1.2)*{#1}; (2.8,4.2)*{#2};\endxy
\fontsize{10}{10}\selectfont}

\newcommand{\FGLBK}[2]
{\fontsize{7}{7}\selectfont
\xy(0,6)*{}="T1"; (6,6)*{}="T2";(0,3)*{}="M1"; (6,3)*{}="M2";(0,0)*{}="B1"; (6,0)*{}="B2"; 
"T1"; "T2" **\dir{-};"T1"; "B1" **\dir{-};"M1"; "M2" **\dir{-};"T2"; "B2" **\dir{-};"B1"; "B2" **\dir{-};
"M1"; "B2" **\dir{-};"M2"; "B1" **\dir{-};
(2.8,1.2)*{#1}; (2.8,4.2)*{#2};\endxy
\fontsize{10}{10}\selectfont}

\newcommand{\THBK}[1]
{\fontsize{7}{7}\selectfont
\xy (0,3)*{}="M1"; (6,3)*{}="M2";(0,0)*{}="B1"; (6,0)*{}="B2"; 
"M2"; "B2" **\dir{-};"M1"; "B1" **\dir{-};"M1"; "M2" **\dir{-}; "B1"; "B2" **\dir{-};
(2.8,1.2)*{#1};\endxy
\fontsize{10}{10}\selectfont}

\newcommand{\Tbone}
{\xy (0,0)*++{\TGLBK{0}{0}};
 \endxy }
 
\newcommand{\Tbtwo}
{\xy (0,0)*++{\TGLBK{0}{0}};(0,6)*++{\TTBK{1}} ;
 \endxy }  
\newcommand{\Tbthree}
{\xy (0,0)*++{\TGLBK{0}{0}};(0,6)*++{\TTBK{1}};(0,10.5)*++{\THBK{2}};
 \endxy } 
\newcommand{\Tbfour}
{\xy (0,0)*++{\TGLBK{0}{0}};(0,6)*++{\TTBK{1}};(0,12)*++{\TLBK{2}{2}};
 \endxy } 
\newcommand{\Tbfive}
{\xy (0,0)*++{\TGLBK{0}{0}};(0,6)*++{\TTBK{1}};(0,12)*++{\TLBK{2}{2}};(0,18)*++{\TTBK{1}};

 \endxy } 
\newcommand{\Tbsix}
{\xy (0,0)*++{\TGLBK{0}{0}};(0,6)*++{\TTBK{1}};(0,12)*++{\TLBK{2}{2}};(0,18)*++{\TTBK{1}};
(0,22.5)*++{\THBK{0}};
 \endxy } 
\newcommand{\Tbseven}
{\xy (0,0)*++{\TGLBK{0}{0}};(0,6)*++{\TTBK{1}};(0,12)*++{\TLBK{2}{2}};(0,18)*++{\TTBK{1}};
(0,24)*++{\TLBK{0}{0}};
 \endxy }

\newcommand{\FTBK}[1]
{\xy (0,6)*{}="T1"; (6,6)*{}="T2"; (0,0)*{}="B1"; (6,0)*{}="B2"; 
"T1"; "T2" **\dir{-};"T1"; "B1" **\dir{-};"T2"; "B2" **\dir{-};(2.8,2.3)*{#1};\endxy}

\newcommand{\FHBK}[1]
{\fontsize{7}{7}\selectfont
\xy (0,3)*{}="M1"; (6,3)*{}="M2";(0,0)*{}="B1"; (6,0)*{}="B2"; 
"M2"; "B2" **\dir{-};"M1"; "B1" **\dir{-};"M1"; "M2" **\dir{-}; "B1"; "B2" **\dir{-};
(2.8,1.2)*{#1};\endxy
\fontsize{10}{10}\selectfont}

\newcommand{\Fbone}
{\xy (0,0)*++{\FGLBK{0}{0}};
 \endxy }
 
\newcommand{\Fbtwo}
{\xy (0,0)*++{\FGLBK{0}{0}};(0,6)*++{\FTBK{1}} ;
 \endxy }  
\newcommand{\Fbthree}
{\xy (0,0)*++{\FGLBK{0}{0}};(0,6)*++{\FTBK{1}};(0,12)*++{\FTBK{2}};
 \endxy } 
\newcommand{\Fbfour}
{\xy (0,0)*++{\FGLBK{0}{0}};(0,6)*++{\FTBK{1}};(0,12)*++{\FTBK{2}};(0,16.5)*++{\FHBK{3}};
 \endxy } 
\newcommand{\Fbfive}
{\xy (0,0)*++{\FGLBK{0}{0}};(0,6)*++{\FTBK{1}};(0,12)*++{\FTBK{2}};(0,18)*++{\FLBK{3}{3}};
 \endxy } 
\newcommand{\Fbsix}
{\xy (0,0)*++{\FGLBK{0}{0}};(0,6)*++{\FTBK{1}};(0,12)*++{\FTBK{2}};(0,18)*++{\FLBK{3}{3}};
(0,24)*++{\FTBK{2}};
 \endxy } 
\newcommand{\Fbseven}
{\xy (0,0)*++{\FGLBK{0}{0}};(0,6)*++{\FTBK{1}};(0,12)*++{\FTBK{2}};(0,18)*++{\FLBK{3}{3}};
(0,24)*++{\FTBK{2}};(0,30)*++{\FTBK{1}};
 \endxy }

\newcommand{\sbk}[1]
{
\fontsize{9}{9}\selectfont
\xy (-1.5,1.5)*{}="T1"; (1.5,1.5)*{}="T2"; (-1.5,-1.5)*{}="B1"; (1.5,-1.5)*{}="B2"; 
"T1"; "T2" **\dir{-};"T1"; "B1" **\dir{-};"T2"; "B2" **\dir{-}; 
(-0.2,0)*{#1};
\endxy
\fontsize{10}{10}\selectfont
}
\newcommand{\Bhbk}[1]
{
\fontsize{5}{5}\selectfont
\xy (-1.5,1.5)*{}="T1"; (1.5,1.5)*{}="T2"; (-1.5,0)*{}="B1"; (1.5,0)*{}="B2"; 
"T1"; "T2" **\dir{-};"T1"; "B1" **\dir{-};"T2"; "B2" **\dir{-}; "B1"; "B2" **\dir{-}; 
(-0.2,0.6)*{#1};
\endxy
\fontsize{10}{10}\selectfont
}

\newcommand{\hbk}[1]
{
\fontsize{5}{5}\selectfont
\xy (-1.5,1.5)*{}="T1"; (1.5,1.5)*{}="T2"; (-1.5,0)*{}="B1"; (1.5,0)*{}="B2"; 
"T1"; "T2" **\dir{-};"T1"; "B1" **\dir{-};"T2"; "B2" **\dir{-}; 
(-0.2,0.6)*{#1};
\endxy
\fontsize{10}{10}\selectfont
}

\newcommand{\sone}
{\xy (0,0)*++{\Bhbk{0}};\endxy}
\newcommand{\stwo}
{\xy (0,0)*++{\Bhbk{0}};(0,2.8)*++{\sbk{1}};\endxy}
\newcommand{\sthree}
{\xy (0,0)*++{\Bhbk{0}};(0,2.8)*++{\sbk{1}};(0,4.5)*++{\hbk{2}};\endxy}
\newcommand{\sfour}
{\xy (0,0)*++{\Bhbk{0}};(0,2.8)*++{\sbk{1}};(0,4.5)*++{\hbk{2}};(0,6.1)*++{\hbk{2}};\endxy}
\newcommand{\sfive}
{\xy (0,0)*++{\Bhbk{0}};(0,2.8)*++{\sbk{1}};(0,4.5)*++{\hbk{2}};(0,6.1)*++{\hbk{2}};(0,8.9)*++{\sbk{1}};\endxy}
\newcommand{\ssix}
{\xy (0,0)*++{\Bhbk{0}};(0,2.8)*++{\sbk{1}};(0,4.5)*++{\hbk{2}};(0,6.1)*++{\hbk{2}};(0,8.9)*++{\sbk{1}};
     (0,10.7)*++{\hbk{0}};\endxy}
\newcommand{\sseven}
{\xy (0,0)*++{\Bhbk{0}};(0,2.8)*++{\sbk{1}};(0,4.5)*++{\hbk{2}};(0,6.1)*++{\hbk{2}};(0,8.9)*++{\sbk{1}};
     (0,10.7)*++{\hbk{0}};(0,12.3)*++{\hbk{0}};\endxy}
\newcommand{\seight}
{\xy (0,0)*++{\Bhbk{0}};(0,2.8)*++{\sbk{1}};(0,4.5)*++{\hbk{2}};(0,6.1)*++{\hbk{2}};(0,8.9)*++{\sbk{1}};
     (0,10.7)*++{\hbk{0}};(0,12.3)*++{\hbk{0}};(0,15.1)*++{\sbk{1}};\endxy}
\newcommand{\snine}
{\xy (0,0)*++{\Bhbk{0}};(0,2.8)*++{\sbk{1}};(0,4.5)*++{\hbk{2}};(0,6.1)*++{\hbk{2}};(0,8.9)*++{\sbk{1}};
     (0,10.7)*++{\hbk{0}};(0,12.3)*++{\hbk{0}};(0,15.1)*++{\sbk{1}};(0,16.9)*++{\hbk{2}};\endxy}
\newcommand{\sten}
{\xy (0,0)*++{\Bhbk{0}};(0,2.8)*++{\sbk{1}};(0,4.5)*++{\hbk{2}};(0,6.1)*++{\hbk{2}};(0,8.9)*++{\sbk{1}};
     (0,10.7)*++{\hbk{0}};(0,12.3)*++{\hbk{0}};(0,15.1)*++{\sbk{1}};(0,16.9)*++{\hbk{2}};
	 (0,18.5)*++{\hbk{2}};\endxy}


\newcommand{\Go}{
{\xy (0,-5)*++{\sone};\endxy} 
}	 
\newcommand{\Gt}{
{\xy (0,-3.4)*++{\stwo};\endxy} 
}
\newcommand{\Gto}{
{\xy (-3,-4.8)*++{\sone};(0,-3.4)*++{\stwo};\endxy} 
}
\newcommand{\Gth}{
{\xy (0,-2.6)*++{\sthree};\endxy} 
}
\newcommand{\Gtho}{
{\xy (-3,-4.8)*++{\sone};(0,-2.6)*++{\sthree};\endxy} 
}
\newcommand{\Gf}{
{\xy (0,-1.8)*++{\sfour};\endxy} 
}
\newcommand{\Gtht}{
{\xy (-3,-3.4)*++{\stwo};(0,-2.6)*++{\sthree};\endxy} 
}
\newcommand{\Gfo}{
{\xy (-3,-4.8)*++{\sone};(0,-1.8)*++{\sfour};\endxy} 
}
\newcommand{\Gfi}{
{\xy (0,-0.3)*++{\sfive};\endxy} 
}
\newcommand{\Gfio}{
{\xy (-3,-4.8)*++{\sone};(0,-0.3)*++{\sfive};\endxy} 
}
\newcommand{\Gft}{
{\xy (-3,-3.4)*++{\stwo};(0,-0.3)*++{\sfive};\endxy} 
}
\newcommand{\Gthto}{
{\xy (-6,-4.8)*++{\sone};(-3,-3.4)*++{\stwo};(0,-2.6)*++{\sthree};\endxy} 
}
\newcommand{\Gs}{
{\xy (0,0.5)*++{\ssix};\endxy} 
}
\newcommand{\Gso}{
{\xy (-3,-5)*++{\sone};(0,0.5)*++{\ssix};\endxy} 
}
\newcommand{\Gfit}{
{\xy (-3,-3.4)*++{\stwo};(0,-0.3)*++{\sfive};\endxy} 
}
\newcommand{\Gfth}{
{\xy (-3,-2.6)*++{\sthree};(0,-1.8)*++{\sfour};\endxy} 
}
\newcommand{\Gfto}{
{\xy (-6,-4.8)*++{\sone};(-3,-3.4)*++{\stwo};(0,-1.8)*++{\sfour};\endxy} 
}
\newcommand{\Gse}{
{\xy (0,1.3)*++{\sseven};\endxy} 
}
\newcommand{\Gseo}{
{\xy (-3,-4.8)*++{\sone};(0,1.3)*++{\sseven};\endxy} 
}
\newcommand{\Gst}{
{\xy (-3,-3.4)*++{\stwo};(0,0.5)*++{\ssix};\endxy} 
}
\newcommand{\Gftho}{
{\xy (-6,-5)*++{\sone};(-3,-2.6)*++{\sthree};(0,-1.8)*++{\sfour};\endxy} 
}
\newcommand{\Gfith}{
{\xy (-3,-2.6)*++{\sthree};(0,-0.3)*++{\sfive};\endxy} 
}
\newcommand{\Gfito}{
{\xy (-6,-4.8)*++{\sone};(-3,-3.4)*++{\stwo};(0,-0.3)*++{\sfive};\endxy} 
}
\newcommand{\Gei}{
{\xy (0,2.8)*++{\seight};\endxy} 
}
\newcommand{\Geio}{
{\xy (-3,-4.8)*++{\sone};(0,2.8)*++{\seight};\endxy} 
}
\newcommand{\Gset}{
{\xy (-3,-3.4)*++{\stwo};(0,1.3)*++{\sseven};\endxy} 
}
\newcommand{\Gsth}{
{\xy (-3,-2.6)*++{\sthree};(0,0.5)*++{\ssix};\endxy} 
}
\newcommand{\Gsto}{
{\xy (-6,-4.8)*++{\sone};(-3,-3.4)*++{\stwo};(0,0.5)*++{\ssix};\endxy} 
}
\newcommand{\Gfif}{
{\xy (-3,-1.8)*++{\sfour};(0,-0.3)*++{\sfive};\endxy} 
}
\newcommand{\Gfitho}{
{\xy (-6,-4.9)*++{\sone};(-3,-2.6)*++{\sthree};(0,-0.4)*++{\sfive};\endxy} 
}
\newcommand{\Gftht}{
{\xy (-6,-3.4)*++{\stwo};(-3,-2.6)*++{\sthree};(0,-1.8)*++{\sfour};\endxy} 
}
\newcommand{\Gn}{
{\xy (0,3.6)*++{\snine};\endxy} 
}
\newcommand{\Gno}{
{\xy (-3,-4.8)*++{\sone};(0,3.6)*++{\snine};\endxy} 
}
\newcommand{\Geit}{
{\xy (-3,-3.4)*++{\stwo};(0,2.8)*++{\seight};\endxy} 
}
\newcommand{\Gseth}{
{\xy (-3,-2.6)*++{\sthree};(0,1.3)*++{\sseven};\endxy} 
}
\newcommand{\Gseto}{
{\xy (-6,-4.8)*++{\sone};(-3,-3.4)*++{\stwo};(0,1.3)*++{\sseven};\endxy} 
}
\newcommand{\Gsf}{
{\xy (-3,-1.8)*++{\sfour};(0,0.5)*++{\ssix};\endxy} 
}
\newcommand{\Gstho}{
{\xy (-6,-4.9)*++{\sone};(-3,-2.6)*++{\sthree};(0,0.5)*++{\ssix};\endxy} 
}

\newcommand{\Gfitht}{
{\xy (-6,-3.4)*++{\stwo};(-3,-2.6)*++{\sthree};(0,-0.2)*++{\sfive};\endxy} 
}
\newcommand{\Gfifo}{
{\xy (-6,-4.9)*++{\sone};(-3,-1.8)*++{\sfour};(0,-0.3)*++{\sfive};\endxy} 
}

\newcommand{\Gfthto}{
{\xy (-9,-4.8)*++{\sone};(-6,-3.4)*++{\stwo};(-3,-2.6)*++{\sthree};(0,-1.8)*++{\sfour};\endxy} 
}
\newcommand{\Gte}{
{\xy(0,4.4)*++{\sten};\endxy} 
}

\newcommand{\GraphF}
{
\fontsize{16}{16}\selectfont
\scalebox{.7}{\xymatrix@R=-2pc@H=-5pc{
 & & & & & & & & &  \\
\emptyset \ar[dd]^0 & & & & & & & &\Gte \\
 & & & & & &\Gei\ar[r]^2 \ar[dr]^0 &\Gn\ar[ur]^2\ar[r]^0 &\Gno \\
\Go \ar[dd]^1 & & &\Gfi \ar[r]^0 &\Gs \ar[r]^0 &\Gse \ar[ur]^1 \ar[r]^0 &\Gseo\ar[r]^1 &\Geio\ar[ur]^2\ar[r]^1 &\Geit \\
 &\Gth \ar[r]^2 \ar[ddr]^0 &\Gf \ar[ur]^1 \ar[dr]^0 & & &\Gso\ar[r]^1 &\Gst\ar[dr]^2 \ar[r]^0 &\Gset\ar[dr]^2 \ar[r]^0 &\Gseto \\
\Gt \ar[ur]^2 \ar[dr]^0& & &\Gfo \ar[r]^1 &\Gfio \ar[r]^1 \ar[ur]^0 &\Gfit\ar[dr]^2 \ar[ur]^0 & &\Gsth\ar[dr]^2 \ar[r]^0 &\Gseth \\
 &\Gto\ar[r]^2 &\Gtho\ar[ur]^2 \ar[dr]^1 & &\Gft\ar[r]^2 \ar[dr]^0 &\Gfth\ar[r]^1 \ar[dr]^0 &\Gfith\ar[r]^2 \ar[ur]^0 &\Gfif \ar[r]^0 &\Gsf \\
 & & &\Gtht\ar[ur]^2 \ar[r]^0 &\Gthto\ar[r]^2 &\Gfto\ar[r]^2\ar[dr]^1 &\Gftho\ar[r]^1 &\Gftht\ar[dr]^1 \ar[r]^0 &\Gfthto \\
 & & & & & &\Gfito\ar[dr]^2 \ar[r]^0 &\Gsto\ar[dr]^2 &\Gfitht \\
 & & & & & & &\Gfitho\ar[dr]^2 \ar[r]^0 &\Gstho \\
 & & & & & & & &\Gfifo \\
}}
\fontsize{10}{10}\selectfont
}


\begin{document}

\title[Young walls of type $D^{(2)}_{n+1}$ and Strict partitions.]
{Young walls of type $D^{(2)}_{n+1}$ and Strict partitions.}

\author[Se-jin Oh]{Se-jin Oh$^{1,2}$} 
\address{Department of Mathematical Sciences, Seoul National University,
599 Gwanak-ro, Gwanak-gu, Seoul 151-747, Korea} \email{sj092@snu.ac.kr}

\thanks{$^1$ This work was supported by BK21 Mathematical Sciences Division.}
\thanks{$^2$ This work was supported by NRF Grant \# 2010-0010753.}

\subjclass[2000]{81R50, 17B37, 17B65, 05A17} \keywords{crystal basis, Euler's partition theorem, strict partition, Young walls}

\begin{abstract}
We show that the number of reduced Young walls of type $D_{n+1}^{(2)}$ with $m$ blocks is independent of $n$ and the same as the number of strict partitions of $m$. Thus the principally specialized character $\chi_n^{\Lambda_0}(t)$ of $V(\Lambda_0)$ over $U_q(D_{n+1}^{(2)})$ can be interpreted as a generating function for strict partitions. Hence we obtain an infinite family of generalizations of Euler's partition identity.
\end{abstract}

\maketitle

\section*{Introduction}

The characters of integrable modules over quantum groups $U_q(\g)$ are important algebraic invariants which \emph{determine} the isomorphism classes of integrable modules in the sense that $M \cong N$ if and only if $\ch M=\ch N$. 
In \cite{Kash90, Kash91}, Kashiwara developed the \emph{crystal basis theory} for integrable $U_q(\g)$-modules from which a lot of combinatorial properties of integrable modules can be deduced.
For instance, using an explicit realization of crystal bases, one can compute the characters of integrable modules.

In \cite{K03}, Kang introduced the notion of \emph{Young walls} as a new combinatorial scheme for realizing the crystal bases of integrable highest weight modules over quantum affine algebras.
In that paper, it was shown that the set $\F$ of \emph{proper Young walls} has the crystal structure (induced by the Kashiwara operators $\tilde{e}_i$, $\tilde{f}_i$).
Moreover, the crystal $B(\Lambda)$ of the basic representation $V(\Lambda)$ was realized as the crystal $\K$ consisting of \emph{reduced Young walls}. 
Using these realizations, we can derive explicit formulas for the characters of level 1 highest weight modules (see \cite{HK02, KK04} for more details).

A weakly decreasing sequence of non-negative integers $\lambda=(\lambda_1,\lambda_2,\cdots)$ is called a \emph{partition} of $m$, denoted by $\lambda \vdash m$, if $|\lambda|:=\sum_i\lambda_i=m$. 
A partition $\lambda$ is called a \emph{strict partition} if all the nonzero parts are strictly decreasing and an \emph{odd partition} if all the nonzero parts are odd. Let $\mathcal{R}(m)$ (respectively, $\mathcal{Q}(m)$) be the the number of strict (respectively, odd) partitions of $m$. 
Then \emph{Euler's partition identity} states that $\mathcal{R}(m)=\mathcal{Q}(m)$ because the generating function for strict partitions and the one for odd partitions are the same (see \cite{An66}, for more details):
$$\prod_{i=1}^{\infty} (1+t^i) = \prod_{i=1}^{\infty} \dfrac{1}{1-t^{2i-1}}.$$

In this paper, we show that the \emph{principally specialized character} $\chi_n^{\Lambda_0}(t)$ of the basic representation $V(\Lambda_0)$ over $U_q(D_{n+1}^{(2)})$ can be interpreted as a generating function for strict partitions.
More precisely, the number of reduced Young walls of type $D_{n+1}^{(2)}$ with  $m$ blocks, $\mathcal{R}(m)$, coincides with the number of strict partitions of $m$.
In particular, it does \emph{not} depend on $n$, which is a rather surprising fact. 
(This fact was already discovered in \cite{NY94} using the technique of vertex operators.) 
Thus we obtain infinite families of partitions for which the Euler's partition identity hold. 
Furthermore, by defining the notion of a \emph{virtual character} for strict partitions, we show that the number of strict partitions of weight $\Lambda_0-\alpha$ is the same as the number of reduced Young walls of weight $\Lambda_0-\alpha$, which leads the the following conjecture.

\begin{conjecture} \label{Conjecture: crystal structure on Strict partition}
The set $\S$ of strict partitions has a $U_q(D_{n+1}^{(2)})$-crystal structure and it is isomorphic to the highest weight crystal $B(\Lambda_0)$ over $U_q(D_{n+1}^{(2)})$ for every $n \in \Z_{\ge 2}$.
\end{conjecture}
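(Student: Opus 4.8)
The plan is to equip $\S$ with Kashiwara operators defined directly through the bar combinatorics of strict partitions, and then to identify the resulting object with $B(\Lambda_0)$. To a strict partition I attach its shifted diagram and assign to each cell a residue in $\{0,1,\dots,n\}$ by the periodic doubled-palindrome pattern $0,1,\dots,n,n,\dots,1,0,0,\dots$ that colors the $D^{(2)}_{n+1}$ ground-state wall. For a fixed $i$ I form an $i$-signature from the addable and removable $i$-bars (moves that lengthen, shorten, create, or delete a part while preserving strictness), cancel adjacent pairs of opposite sign, and let $\tilde{f}_i$ act on the cell carrying the last unpaired $+$ and $\tilde{e}_i$ on the cell carrying the first unpaired $-$, with $\varepsilon_i,\varphi_i$ counting the uncancelled signs. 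This mirrors the Misra--Miwa operators on ordinary partitions, and the empty partition is the putative highest weight vector of weight $\Lambda_0$.

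To certify that this is a genuine crystal I would compare it node by node against the established operators on reduced Young walls. Since the paper already matches $\K$ and $\S$ weight space by weight space in every $\Lambda_0-\alpha$, it supplies the answer key against which the bar rule can be verified, and I expect the agreement to propagate by induction on the number of cells from the empty configuration, exploiting the one-block recursion of reduced walls. For the crystal axioms themselves, the cleanest uniform route for all $n\ge 2$ is virtualization: $D^{(2)}_{n+1}$ is the order-two folding of the simply-laced affine algebra $A^{(1)}_{2n-1}$, obtained by reflecting the $2n$-cycle so as to fix the two nodes that become $0$ and $n$. I would realize the bar operators as virtual operators inside an ambient $A^{(1)}_{2n-1}$-crystal on an unfolded diagram, taking for each $i$ the product of the $\tilde{e}_j$ (resp.\ $\tilde{f}_j$) over the $\sigma$-orbit of $i$ with the appropriate multiplicity $\gamma_i$. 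Stembridge's local axioms settle normality in the simply-laced parent, and the virtual crystal formalism then transfers normality and the full string data back to $\S$.

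Once $\S$ is known to be a connected normal crystal with a unique highest weight element of weight $\Lambda_0$, the characterization of highest weight crystals yields $\S\cong B(\Lambda_0)$, while the character identity already proved guarantees that this isomorphism exhausts all of $\S$ rather than a proper subcrystal. The main obstacle I anticipate lies precisely at the two special nodes $0$ and $n$: because these residues are doubled and sit at the double bonds, the object added or removed by $\tilde{e}_i,\tilde{f}_i$ is a genuine bar rather than a single cell, and the strictness constraint interacts with addability exactly there. Getting $\varepsilon_i,\varphi_i$ and the closure of the operators right at $i\in\{0,n\}$ --- equivalently, fixing the scaling factors $\gamma_0,\gamma_n$ in the virtualization so that the folded operators land back in $\S$ --- is where the real work concentrates, and where a wrong convention would break either the crystal axioms or the isomorphism with $B(\Lambda_0)$.
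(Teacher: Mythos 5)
The statement you are addressing is stated in the paper only as a conjecture; the paper offers no proof of it, only supporting evidence (the weight-space identity $\vch_n(\S[m])=\vch_n(\K_n[m])$ of Theorem \ref{Thm:weights of strict partitions} and the explicit conjectural graph for $D^{(2)}_3$ in Example \ref{Exa:GraphF}). So there is no proof in the paper to compare yours against, and your submission must be judged as a standalone argument --- and as such it is a research plan, not a proof.

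The central gap is that the Kashiwara operators on $\S$ are never actually defined. Everything hinges on what an ``addable/removable $i$-bar'' of a strict partition is, in particular at the residues $0$ and $n$ where the blocks are half-height and doubled; you explicitly defer this (``where the real work concentrates''), but that deferral is the entire content of the conjecture. Without a concrete rule one cannot check that $\tilde f_i$ preserves strictness, that the signature cancellation is well defined, or that $\varepsilon_i,\varphi_i$ match $\langle h_i,\wt(\lambda)\rangle$. The second leg of your plan, virtualization into $A^{(1)}_{2n-1}$, has the same problem one level up: to invoke the virtual crystal formalism you must exhibit an explicit embedding of $\S$ into a concrete ambient $A^{(1)}_{2n-1}$-crystal, specify the folding automorphism and the scaling factors $\gamma_i$, and prove the image is closed under the virtual operators; Stembridge's axioms certify normality of the \emph{ambient} crystal but do not by themselves hand you a crystal structure on the folded object. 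Finally, the character identity $\vch_n(\S)=\chi_n^{\Lambda_0}$ that the paper does prove is an equality of weight multiplicities only; it cannot substitute for connectedness of your putative crystal graph, nor can it certify that your operators agree with those on $\K_n$ ``node by node'' --- a weight-preserving bijection between $\S$ and $\K_n$ need not intertwine any operators. As written, the proposal identifies the right difficulties but resolves none of them, so the conjecture remains open after your argument.
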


\vskip 1em

\noindent
{\bf Acknowledgements.} 
The author would like to express his sincere gratitude to Research Institute for Mathematical Sciences, Kyoto University for their hospitality during his visit in January, 2011. After the author was finished with this paper, he was informed by Shunsuke Tsuchioka of the result in \cite{NY94}. The author is very grateful to him for pointing it out. The author would also like to thank Prof. Seok-Jin Kang and Daehong Kim for many valuable discussions and suggestions.

\vskip 3em

\section{The quantum affine algebra $U_q(D_{n+1}^{(2)})$}

Let $I=\{ 0,1,...,n \}$ ($n \ge 2$) be the index set. 
The \emph{affine Cartan datum} $(A,P^{\vee},P,\Pi^{\vee},\Pi)$ of type $D_{n+1}^{(2)}$ consists of 

\begin{enumerate}

\item the {\it Cartan matrix}
$$A=(a_{ij})_{i,j \in I}=\left(\begin{array}{ccccccc}
2 & -1 &0 & \cdots & \cdots & 0 \\
-2 & 2 & -1 & \cdots  & \cdots &0 \\
0 & -1 & 2 & \ddots   & \cdots &0 \\
\vdots & \cdots & \ddots & \ddots & \ddots  & \vdots\\
\vdots & \cdots & \cdots &-1&2&-2 \\
0  & \cdots & \cdots &0 &-1 &2
\end{array} \right)$$

\item a free abelian group $P^{\vee}=\bigoplus_{i=0}^{n} \Z h_i \oplus \Z d $, the \emph{dual weight lattice},
\item a free abelian group $P=\bigoplus_{i=0}^{n} \Z \Lambda_i \oplus \Z \delta \subset \h^*=\C \otimes_{\Z} P^{\vee} $, the \emph{weight lattice},

\item $\Pi^{\vee}=\{h_i \mid i \in I\} \subset P^{\vee}:= \Hom(P, \Z)$, the \emph{set of simple coroots},
  
\item $\Pi = \{ \alpha_i \in P|\ i \in I  \}$, the set of \emph{simple roots},
\end{enumerate}

satisfying the following properties:

\begin{enumerate}

\item[(a)] $\langle h_i, \alpha_j \rangle = a_{ij}$ for all $i,j \in I$ and $\langle d, \alpha_j \rangle = \delta_{j0}$,

\item[(b)] $\Pi$ is linearly independent,

\item[(c)] $\langle h_j, \Lambda_i \rangle = \delta_{ij}$ for all $j \in I$, $\langle d, \Lambda_i \rangle =0$,

\item[(d)] $\langle h_j, \delta \rangle=0$ and $\langle d, \delta \rangle=1$.

\end{enumerate}

We denote by $P^{+} := \{\Lambda \in P \mid  \langle h_i,\Lambda \rangle \in \Z_{\ge 0},\  i \in I \}$ the set of \emph{dominant integral weights}. The free abelian group $Q:=\sum_{i \in I} \Z\alpha_i$ is called the \emph{root lattice} and we denote by $Q^{+}:=\bigoplus_{i \in I}\Z_{\ge 0}\alpha_i$. 
For $\alpha=\sum_{i \in I} k_i \alpha_i \in Q$, we define the \emph{height} of $\alpha$ to be $\het(\alpha):=\sum_{i \in I} k_i$.
Note that the Cartan matrix is \emph{symmetrizable};
i.e., there is a diagonal matrix $D={\rm diag}(1,2,\cdots,2,1)$ such that $DA$ is symmetric. 

Let $q$ be an indeterminate. For $i\in I$ and $m,n \in \Z_{\ge 0}$, define $$ q_0=q_n=q, \ \
q_1=\cdots=q_{n-1}=q^2, \ \
[n]_{q_i} =\frac{ {q_i}^n - {q_i}^{-n} }{ {q_i} - {q_i}^{-1} }, \ \
[n]_{q_i}! = \prod^{n}_{k=1} [k]_{q_i} , \ \
\left[\begin{matrix}m \\ n\\ \end{matrix} \right]_{q_i}=  \frac{ [m]_{q_i}! }{[m-n]_{q_i}! [n]_{q_i}! }.
$$

\begin{definition} The {\em quantum group} $U_q(D_{n+1}^{(2)})$ with a Cartan datum $(A,P^{\vee},P,\Pi^{\vee},\Pi)$ is the associative
algebra over $\C(q)$ with ${\bf 1}$ generated by $e_i,f_i$ $(i \in I)$ and
$q^{h}$ $(h \in P^{\vee})$ satisfying the following relations:

\begin{enumerate}

\item  $q^0=1, q^{h} q^{h'}=q^{h+h'} $ for $ h,h' \in P^{\vee},$

\item  $q^{h}e_i q^{-h}= q^{ \langle h,\alpha_i \rangle} e_i,
          \ q^{h}f_i q^{-h} = q^{- \langle h,\alpha_i \rangle }f_i$ for $h \in P^{\vee}, i \in I$,

\item  $e_if_j - f_je_i = \delta_{ij} \dfrac{K_i -K^{-1}_i}{q_i- q^{-1}_i }, \ \ \mbox{ where } K_i=q_i^{ h_i},$

\item  $\displaystyle \sum^{1-a_{ij}}_{k=0} \left[\begin{matrix}1-a_{ij} \\ k\\ \end{matrix} \right]_{q_i} e^{1-a_{ij}-k}_i e_j e^{k}_i = 0 \quad \text{ if } i \ne j, $

\item $\displaystyle \sum^{1-a_{ij}}_{k=0} \left[\begin{matrix}1-a_{ij} \\ k\\ \end{matrix} \right]_{q_i} f^{1-a_{ij}-k}_if_j f^{k}_i=0 \quad \text{ if }  i \ne j. $

\end{enumerate}

\end{definition}

A $U_q(\g)$-module $V$ is called a \emph{weight module} if it admits a \emph{weight space decomposition} $V=\bigoplus_{\mu \in P} V_{\mu}$, where $V_{\mu}=\{ v \in V | \ q^hv=q^{\langle h, \mu \rangle } v \text{ for all } h \in P^{\vee} \}.$ If $\dim_{\C(q)} V_{\mu} < \infty \text{ for all } \mu \in P$, we define the \emph{character} of $V$ by 
$$\chi_{D^{(2)}_{n+1}}(V)=\sum_{\mu \in P} (\dim_{\C(q)} V_{\mu} )e(\mu),$$
where $e(\mu)$ is an basis element of the group algebra $\Z[P]$ with the multiplication given by $e(\mu)e(\nu)=e(\mu+\nu) \text{ for all } \mu,\nu \in P$.

Then it is proved in \cite{Lus93}, \cite[Chapter 3]{HK02} that the category of integrable modules is semisimple with its irreducible objects being isomorphic to $V_n(\Lambda)$ for some $\Lambda \in P^+$ such that 
\begin{itemize} 
\item it is generated by a unique highest weight $v_{\Lambda}$,
\item $q^h$ acts on $v_\Lambda$ by a multiplication of $q^{\langle  h_i ,\Lambda \rangle}$ for all $h \in P^{\vee}$,
\item $e_i$ and $f_i^{\langle  h_i ,\Lambda \rangle+1}$ act trivially on $v_{\Lambda}$ for all $i \in I$,
\item it admits a weight space decomposition, 
$V_n(\Lambda)=\bigoplus_{\mu \in P}V_n(\lambda)_\mu.$ 
\end{itemize}

For $V_n(\Lambda)$ ($\Lambda \in P^+$), we set $\chi^{\Lambda}_n =\chi_{D^{(2)}_{n+1}}(V(\Lambda))$,
when there is no danger of confusion.

\begin{definition} We define the \emph{pricipally specialized character} of $V_n(\Lambda)$ as follows:
$$ \chi^{\Lambda}_{D^{(2)}_{n+1}}(t)=\chi_n^{\Lambda}(t)=\sum_{m}(\sum_{ \substack{\mu \in P \\ \het(\Lambda -\mu)=m}} \dim V_n(\Lambda)_{\mu})t^m,$$
where $t$ is the indeterminate. 
\end{definition}
Note that $\chi_n^{\Lambda}(t)$ can be derived by specializing $e(\Lambda)=1$, $e(-\alpha_i)=t$ 
($i \in I$) in $\chi_n^{\Lambda}$; i.e., 
$$\chi_n^{\Lambda}(t) = \chi_n^{\Lambda}|_{\substack{e(\Lambda)=1 \\ e(-\alpha_i)=t} }, \text{ for all } i \in I.$$

The \emph{level} of $\Lambda \in P^{+}$ is defined to be the nonnegative integer 
$\langle c, \Lambda \rangle$, where $c=h_0+2h_1+\cdots+2h_{n-1}+h_n$.
Thus the dominant integral weights of level 1 are $\Lambda_0$ and $\Lambda_n$.

Let $\A_0=\{f/g \in \C(q) \mid f,g \in \C[q],g(0) \neq 0 \}$. It is shown in \cite{Kash91} that $V_n(\Lambda)$
has a unique \emph{crystal basis} $(L(\Lambda),B(\Lambda))$, where $L(\Lambda)$ is a free $\A_0$-lattice of 
$V_n(\Lambda)$, $B(\Lambda)$ is a $\C$-basis of $L(\Lambda)/qL(\Lambda)$ and $B(\Lambda)$ has a $I$-colored
oriented graph structure induced by the \emph{Kashiwara} operators $\tilde{e}_i$, $\tilde{f}_i$ ($i \in I$).
Moreover, $B(\Lambda)$ encodes the combinatorial information of $V_n(\Lambda)$ as follows:
\begin{itemize} 
\item $B_n(\Lambda)$ admits a weight space decomposition. i.e,
$$B_n(\Lambda)= \bigsqcup_{\mu \in P} B_n(\Lambda)_\mu \text{ where } B_n(\Lambda)_\mu= B_n(\Lambda) \cap V_n(\Lambda)_\mu,$$
\item $|B_n(\Lambda)_\mu| =\dim_{\C(q)}V_n(\Lambda)_\mu=\dim_{\C}V_n(\Lambda)_\mu$.
\end{itemize}
Thus $\chi^\Lambda_n$ and $\chi_n^{\Lambda}(t)$ can be expressed as follows:
\begin{align} \label{eqn:ch in crystal}
 \chi^{\Lambda}_n= \sum_{\mu \in P} |B_n(\Lambda)_\mu|e(\mu), \ \ 
\chi_n^{\Lambda}(t)= \sum_{m}(\sum_{ \substack{\mu \in P \\ \het(\Lambda -\mu)=m}} |B_n(\Lambda)_\mu|)t^m. 
\end{align}
In particular, $B(\Lambda)$ becomes a $U_q(D^{(2)}_{n+1})$-crystal.
(See \cite{HK02, Kash93} for more details on the crystal bases theory.)

\vskip 3em

\section{Young wall realization of $B_n(\Lambda)$ of level 1}

In \cite{K03}, Kang gave a realization of level 1 highest weight crystals $B(\Lambda)$ for all classical
quantum affine algebras in terms of \emph{reduced Young walls} (see \cite{HKL04} as well). Hereafter, we briefly introduce the result in
\cite{K03} only for the type $D^{(2)}_{n+1}$. Since $V_n(\Lambda_0)$ can be identified with $V_n(\Lambda_n)$
by symmetry, we will consider the case of $\Lambda_0$ only.

Basically, the Young walls are built of colored blocks. In case of the type $D^{(2)}_{n+1}$,
there are two types of blocks whose shapes are different as follows:
\begin{itemize}
\item Unit block $\quad \ublock{*} \quad $ whose colors are $1,...,n-1$,
\item Half-height block $\quad \hhblock{*} \quad$ whose colors are $0,n$.
\end{itemize}

Then the set of Young walls is the set of blocks built on the \emph{ground-state Young wall} by the following 
rules:
\begin{enumerate}
\item[(a)] All blocks should be placed on top of the ground-state Young wall or another block.
\item[(b)] The colored blocks should be stacked in the pattern given below.
\item[(c)] Except for the right-most column, there should be no free space to the right of any blocks.
\end{enumerate}
The \emph{ground-state Young wall} $Y_{\Lambda_0}$ and the pattern in (b) are given by follows:
$${\xy (0,0)*++{\patterno};\endxy}$$
where the blocks in bottom are the ground-state Young wall $Y_{\Lambda_0}$.

A column in Young wall is called a \emph{full column} if its height is a multiple of unit length. We say a Young wall \emph{proper} if none of the full columns have same height. The part of a column consisting 
of $2$ $0$-blocks, $2$ $1$-blocks, $\ldots$, $2$ $n$-blocks is called a \emph{$\delta$-column}. 
\begin{definition} \
\begin{enumerate}
\item A column in a proper Young wall is said to contain a
{\em removable $\delta$} if we may remove a $\delta$-column
from $Y$ and still obtain a proper Young wall.
\item A proper Young wall is said to be {\em reduced}
 if none of its columns contain a removable $\delta$.
\end{enumerate}
\end{definition}

For a given Young wall $Y$, we define the {\it weight}, $\wt(Y)$, of Y as follows:
\begin{align} \label{eqn:weight on partitions}
\wt(Y)= \Lambda_0-\sum_{i \in I} a_i \alpha_i, 
\end{align}
where $a_i$ is the number of $i$-blocks  on the ground-state Young wall $Y_{\Lambda_0}$.

Let $\F_n$ be the set of all proper Young walls built on $Y_{\Lambda_0}$ and $\K_n$ be the set of all reduced proper Young walls built on $Y_{\Lambda_0}$.

\begin{theorem} \cite{K03}
\begin{enumerate}
\item $\F_n$ has a crystal structure induced by the Kashiwara operators $\tilde{e}_i$ and $\tilde{f}_i$.
\item There is a crystal isomorphism between $\K_n$ and $B_n(\Lambda)$.
\end{enumerate}
\end{theorem}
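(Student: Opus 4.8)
The plan is to handle the two parts separately: first equip the full set $\F_n$ of proper Young walls with an abstract crystal structure by defining the Kashiwara operators combinatorially, and then identify the sub-crystal $\K_n$ of reduced walls with a path realization of $B_n(\Lambda_0)$. For part (1), I would proceed via the \emph{signature rule}. Fix a color $i \in I$ and a wall $Y \in \F_n$. Scanning the columns of $Y$ from right to left, I record for each column a symbol $-$ if it carries a removable $i$-block and $+$ if it admits an addable $i$-block, taking care that the two block types behave differently (unit blocks of colors $1,\dots,n-1$ occupy a full slot, while the half-height $0$- and $n$-blocks occupy half a slot). After cancelling every adjacent pair $+\,-$ (the usual bracketing), the reduced $i$-signature has the form $-\cdots-+\cdots+$. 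I then define $\tilde f_i Y$ by adding the $i$-block at the rightmost surviving $+$ (and $\tilde f_i Y = 0$ if there is none), and $\tilde e_i Y$ by removing the $i$-block at the leftmost surviving $-$ (and $0$ if there is none); I set $\varepsilon_i(Y)$ and $\varphi_i(Y)$ to be the numbers of $-$'s and $+$'s, and read $\wt(Y)$ off \eqref{eqn:weight on partitions}. Checking $\varphi_i(Y) - \varepsilon_i(Y) = \langle h_i, \wt(Y) \rangle$, the weight shifts $\wt(\tilde f_i Y) = \wt(Y) - \alpha_i$, and the inverse relations $\tilde e_i \tilde f_i = \mathrm{id}$ on the appropriate domain, yields the crystal axioms and hence part (1).

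The delicate point behind part (1), and the first genuine obstacle, is \emph{well-definedness}: I must show that adding or removing the block dictated by the reduced signature again produces a proper Young wall, i.e. that the stacking pattern (b), the no-space-to-the-right rule (c), and the properness condition (no two full columns of equal height) are all preserved. This is exactly where type $D^{(2)}_{n+1}$ is more subtle than the untwisted cases, because the interleaving of the half-height $0$- and $n$-blocks with the full unit blocks in the prescribed periodic pattern forces a careful count of which $i$-slots are addable or removable in a given column. I would discharge this by a column-by-column case analysis keyed to the displayed pattern, verifying that each column contributes at most one $+$ and one $-$ to the $i$-signature and that the bracketing never demands a move breaking the pattern.

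For part (2), I would use the path realization of the basic representation through perfect crystals. Fix a level-$1$ perfect crystal $B$ for $U_q(D^{(2)}_{n+1})$ with ground-state path $\mathfrak p_{\Lambda_0} = (\cdots \otimes b(1) \otimes b(0))$, so that $B_n(\Lambda_0) \cong \mathcal P(\Lambda_0)$, the set of $\Lambda_0$-paths, with Kashiwara operators given by the semi-infinite tensor product rule. The key construction is a \emph{slicing map}: cut a reduced Young wall $Y \in \K_n$ into its successive columns (equivalently into $\delta$-slices read off the periodic pattern) and send each slice to an element of $B$, producing a sequence that agrees with $\mathfrak p_{\Lambda_0}$ in all but finitely many tensor factors. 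By construction the ground-state wall $Y_{\Lambda_0}$ maps to $\mathfrak p_{\Lambda_0}$, and $Y_{\Lambda_0}$ is the unique element of $\K_n$ annihilated by all $\tilde e_i$, of weight $\Lambda_0$.

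The heart of the argument, and the step I expect to be hardest, is proving that this slicing map is a crystal isomorphism. This requires two things: that the signature-and-bracketing rule defining $\tilde e_i, \tilde f_i$ on $\F_n$ coincides, under the slicing, with the tensor-product signature rule on paths; and that the \emph{reducedness} condition cuts out exactly the image $\mathcal P(\Lambda_0)$, with no removable $\delta$ matching precisely the admissibility plus eventual-ground-state condition on $\Lambda_0$-paths and ruling out redundant slices. Matching the two signature rules is the crux, since it must survive the half-height/full-block bookkeeping; once it is established, the slicing map is a weight-preserving bijection commuting with every $\tilde e_i$ and $\tilde f_i$ and sending highest weight to highest weight, so $\K_n \cong \mathcal P(\Lambda_0) \cong B_n(\Lambda_0)$, completing part (2).
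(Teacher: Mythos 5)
The paper gives no proof of this theorem: it is imported verbatim from Kang's work \cite{K03}, which is exactly why the statement carries the citation. Your outline --- defining $\tilde e_i,\tilde f_i$ on $\F_n$ by a signature/bracketing rule with the half-height $0$- and $n$-blocks handled separately, and then identifying $\K_n$ with the path realization $\mathcal P(\Lambda_0)$ of the basic crystal via a column-slicing map into a level-$1$ perfect crystal --- is precisely the strategy of that cited source, so as a proof plan it is the correct and standard one. Be aware, though, that what you have written is a roadmap rather than a proof: the two steps you yourself flag as the crux (well-definedness of the operators against the stacking pattern, and the matching of the wall signature rule with the tensor-product rule under slicing) are exactly where all the work in \cite{K03} lives, and they are not discharged here.
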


\begin{definition} \label{Def:character of set}
For a $m \in \Z{\ge 0}$, $\mu \in P$ and subset $\mathcal{A}$ of $\F_n$, we define 
\begin{enumerate}
\item $\mathcal{A}[m]$ to be the subset of $\mathcal{A}$ which has $m$ blocks on $Y_{\Lambda_0}$,
\item $\mathcal{A}[\mu]$ to be the subset of $\mathcal{A}$ consisting of Young walls with 
      weight $\mu$,
\item the {\it virtual character} $\vch_n(\mathcal{A})$ of $\mathcal{A}$ to be
$$ \vch_n(\mathcal{A})= \sum_{\mu \in P} |\mathcal{A}[\mu]|e(\mu).$$
\end{enumerate}
\end{definition}

Then the equations in $\eqref{eqn:ch in crystal}$ can be written
by  
\begin{equation}  \label{eqn:ch in YW}
\chi^{\Lambda_0}_n =\vch_n(\K_n)  =\sum_{\mu \in P} |\K_n[\mu]|e(\mu), \ \ 
\chi_n^{\Lambda_0}(t)= \sum_{m}|\K_n[m]| t^m. 
\end{equation}  

\begin{proposition} \label{Prop:Fock decomposition} \cite[Corollary 2.5]{KK04} \
$$\F_n = \bigoplus_{k \in \Z_{\ge 0}} B_n(\Lambda_0-2k(\sum_{i=0}^{n} \alpha_i))^{\oplus \mathcal{P}(k)},$$ 
where $\mathcal{P}(k)$ is the number of partitions of $k$. 
\end{proposition}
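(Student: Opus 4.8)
The plan is to read off the decomposition from the internal crystal structure of $\F_n$. Since $\F_n$ is an integrable crystal all of whose weights lie in $\Lambda_0 - Q^+$ and whose weight multiplicities $|\F_n[\mu]|$ are finite, it is a disjoint union of its connected components, each generated over the $\tilde f_i$ by a unique \emph{highest weight vector}, i.e.\ a proper Young wall $Y$ with $\tilde e_i Y = 0$ for all $i \in I$. Two things then have to be established. First, every such component is isomorphic, as an abstract crystal, to the basic crystal $\K_n \cong B_n(\Lambda_0)$ — so that $B_n(\Lambda_0 - 2k(\sum_i\alpha_i))$ is to be read as the copy of $B(\Lambda_0)$ whose highest weight element has been pushed down to weight $\Lambda_0 - 2k(\sum_i\alpha_i)$, which is legitimate since all these weights have level $1$ and are congruent to $\Lambda_0$ modulo $Q$. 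Second, for each $k$ the number of highest weight vectors of weight $\Lambda_0 - 2k(\sum_i\alpha_i)$ is exactly $\mathcal P(k)$.

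For the first point I would build the \emph{$\delta$-reduction map} $R\colon \F_n \to \K_n$ deleting all removable $\delta$-columns from a proper Young wall. The initial task is to check that $R$ is well defined, i.e.\ that repeatedly removing removable $\delta$'s is confluent and independent of the order of removal, so that $R(Y)$ is a canonical reduced wall. The geometric content is a \emph{separation lemma}: every $Y \in \F_n$ decomposes uniquely into its reduced part $R(Y)$ together with a family of stacked $\delta$-columns whose per-column multiplicities form a partition $\lambda(Y)$ with $|\lambda(Y)| = k$, and then $\wt(Y) = \wt(R(Y)) - 2k(\sum_i\alpha_i)$ since each $\delta$-column carries weight $-2\sum_i \alpha_i$. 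The crucial step is to show that the Kashiwara operators respect this separation: adding a single block via $\tilde f_i$ (respectively removing one via $\tilde e_i$) never alters the $\delta$-part $\lambda(Y)$ and acts only through the reduced part $R(Y)$. Granting this, $R$ intertwines $\tilde e_i,\tilde f_i$ on each component, $\lambda$ is constant on components, and each component maps isomorphically onto $\K_n$; in particular a highest weight vector $H$ satisfies $\tilde e_i H = 0$, whence $R(H) = Y_{\Lambda_0}$ is the ground state and its component is a copy of $\K_n = B_n(\Lambda_0)$ shifted down by $2|\lambda(H)|(\sum_i\alpha_i)$.

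It then remains to count. By the separation lemma the highest weight vectors of weight $\Lambda_0 - 2k(\sum_i\alpha_i)$ are exactly the proper Young walls whose reduced part is $Y_{\Lambda_0}$ and whose $\delta$-part consists of $k$ $\delta$-columns; I would produce an explicit bijection between such configurations and partitions $\lambda \vdash k$, sending the arrangement of $\delta$-columns over the ground state to the partition recording how many $\delta$-columns sit in each successive column. The right-alignment rule (c) forces these multiplicities to be weakly decreasing, and — because properness of $Y$ is enforced through the underlying reduced part rather than the $\delta$-part alone — equal multiplicities are permitted, so one obtains ordinary partitions with repetitions and hence exactly $\mathcal P(k)$ highest weight vectors. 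Assembling the components yields $\F_n = \bigoplus_{k\ge 0} B_n(\Lambda_0 - 2k(\sum_{i=0}^n\alpha_i))^{\oplus \mathcal P(k)}$.

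I expect the main obstacle to be the compatibility of $\tilde e_i,\tilde f_i$ with $\delta$-reduction in the second paragraph: one must rule out that a single Kashiwara operator completes or dismantles a $\delta$-column and thereby changes $\lambda(Y)$ or the reduced silhouette. This is a local but delicate analysis of the signature rule governing $\tilde e_i,\tilde f_i$ on Young walls of type $D^{(2)}_{n+1}$, where the unit blocks (colors $1,\dots,n-1$) and the half-height blocks (colors $0,n$) must be treated separately. As a cross-check, the whole decomposition is equivalent to the character identity $\vch_n(\F_n) = \big(\sum_{k\ge 0} \mathcal P(k)\, e(-2k\sum_i\alpha_i)\big)\,\vch_n(\K_n)$, which can be matched against the known generating function for proper Young walls and against Proposition~\ref{Prop:Fock decomposition}.
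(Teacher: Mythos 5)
The paper offers no proof of this proposition: it is imported verbatim from \cite[Corollary 2.5]{KK04}, so the only comparison available is with the argument of that reference, which proceeds module-theoretically. On its own terms, your enumeration of the would-be maximal vectors reaches the right conclusion: the pure piles of $\delta$-columns over $Y_{\Lambda_0}$ with weakly decreasing column multiplicities summing to $k$ have weight $\Lambda_0-2k\sum_{i}\alpha_i$ and are counted by $\mathcal P(k)$. But your reason for permitting equal multiplicities is off: a column carrying exactly $t$ $\delta$'s above the ground state has height $2nt+\tfrac12$ units, so it is \emph{not} a full column and properness imposes no constraint on it; it is not that properness is ``enforced through the reduced part.''

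The genuine gap is the step you flag yourself and then assume: that $\tilde e_i$ and $\tilde f_i$ preserve the $\delta$-part $\lambda(Y)$ and intertwine with the reduction map $R$. This is essentially the entire content of the proposition, and as stated it is in real doubt: adding a single $i$-block can raise a column past the level at which a $\delta$ in the neighbouring column was removable, so the set of removable $\delta$'s --- hence $R(Y)$ and $\lambda(Y)$ --- can change under a naive application of $\tilde f_i$. One must show that the signature rule never selects such a position, treating the unit colours $1,\dots,n-1$ and the half-height colours $0,n$ separately, and nothing in the proposal does this. A second, smaller gap: your opening claim that a combinatorial crystal with weights bounded above and finite weight multiplicities decomposes into highest weight crystals $B(\mu)$ is false for abstract crystals. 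One needs either the module-theoretic input that $\F_n$ is the crystal of the completely reducible integrable Fock space representation --- which is how \cite{KK04} argues, reducing the whole statement to the count of maximal vectors and bypassing any intertwining lemma --- or else your unproven intertwining claim, which would supply the component structure directly. As written, the proposal is a correct outline whose central lemma is missing.
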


In terms of Young walls, Proposition \ref{Prop:Fock decomposition} can be interpreted as follows:
\begin{equation} \label{eqn:Fock decomposition}
|\F_n[m]| =  \displaystyle \sum_{ \substack{k \ge 0 \\ m-2(n+1)k \ge 0}} 
(|\K_{n}[m-2(n+1)k] | \times \mathcal{P}(k)).
\end{equation}

\section{principally specialized character of $V_{n}(\Lambda_0)$}

Fix $\Delta=n+1$. For a given proper Young wall $Y \in \F_n$, we can associate a partition $\lambda^Y=(\lambda^Y_1,..,\lambda^Y_m,...)  \vdash |Y|$, where $\lambda^Y_i$ is the number of blocks in $i$th-column above the ground-state wall and $|Y|=\sum_{k \in Z_{\ge 0}} \lambda^Y_k$. Thus $\F_n$ and $\K_n$ can be expressed as the sets of partitions as follows:
\begin{align*} 
&\F_n= \{ \lambda=(\lambda_1 \ge \lambda_2 \ge ... \ge \lambda_m,..) |
\lambda_i=\lambda_{i+1} \text{ implies } \lambda_i=t \Delta  \text{ for some } t \in \Z_{\ge 0}  \}. \\
&\K_n= \{ \lambda \in \F_n |
\lambda_i-\lambda_{i+1} \le 2\Delta , \text{ with equality only if } \lambda_i \neq t \Delta  \text{ for all } t \in \Z_{\ge 0}   \}. 
\end{align*} 

Let $\S_n$ be the subset of $\F_n$ consisting of strictly decreasing sequences of non-negative integers; i.e.,
$$ \S_n= \{ \lambda \in \F_n | \ \lambda_i=\lambda_{i+1} \text{ implies } \lambda_i=0  \}. $$

From Definition \ref{Def:character of set},
\begin{equation} \label{eqn:chracter of S_n}
\vch_n(\S_n[m])= \sum_{\mu \in P} | \S_n[m][\mu]|e(\mu)  \text{,   } \hspace{0.3cm} 
\vch_n(\S_n)= \sum_{\mu \in P} |\S_n[\mu]|e(\mu). 
\end{equation}

Since the set of strict partitions $\S_n$ does not depend on $n$, we will drop the subindex $n$ when we want to
emphasize the independence.
\begin{example} For $S[7]$, 
\begin{align*} 
\vch_2(S[7]) = & 3e(\Lambda_0-(3\alpha_0+2\alpha_1+2\alpha_2))+ e(\Lambda_0-(2\alpha_0+2\alpha_1+3\alpha_2)) + e(\Lambda_0-(2\alpha_0+3\alpha_1+2\alpha_2)),\\
\vch_3(S[7]) = & e(\Lambda_0-(3\alpha_0+2\alpha_1+\alpha_2+\alpha_3))+
							  e(\Lambda_0-(2\alpha_0+2\alpha_1+2\alpha_2+\alpha_3))+
							  e(\Lambda_0-(2\alpha_0+2\alpha_1+\alpha_2+2\alpha_3))+ \\
							 & e(\Lambda_0-(2\alpha_0+\alpha_1+2\alpha_2+2\alpha_3))+
							  e(\Lambda_0-(\alpha_0+2\alpha_1+2\alpha_2+2\alpha_3)).
\end{align*} 
\end{example}

We will denote by $\K^{c}_n$ (respectively, $\S^{c}_n$) the complement of $\K_n$ 
(respectively, $\S_n$) in $\F_n$. Note that the set $\S^{c}_n$ depends on $n$.
However, the following theorem tells us that the cardinality of $\S^{c}_n$ does not
depend on $n$.

\begin{theorem} \label{Theorem: Alg A,B} There are $1-1$ and onto maps given as follows:\
\begin{enumerate}
\item $\overline{\Psi_n^m}:\K^{c}_n[m] \to \displaystyle \bigsqcup_{   \substack{k > 0 \\ m-2k \Delta \ge 0}} 
(\K_{n}[m-2k\Delta]  \times \{ \lambda \vdash k \} ).$
\item $\overline{\Phi_n^m}:\S^{c}_{n}[m] \to \displaystyle \bigsqcup_{ \substack{k > 0 \\ m-2k \Delta  \ge 0}} 
(\S[m-2k\Delta] \times \{ \lambda \vdash k \} ).$
\end{enumerate}
\end{theorem}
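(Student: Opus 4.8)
The plan is to pass to the partition descriptions of $\F_n$, $\K_n$ and $\S$ recorded above and to realize both maps as a ``peeling off of $\delta$-columns'', each $\delta$-column accounting for exactly $2\Delta$ blocks (two blocks of every colour $0,1,\dots,n$). Removing $k$ of them drops the block count by $2k\Delta$, matching the indexing on the right-hand sides. For (1) I would lean on Proposition \ref{Prop:Fock decomposition} to supply the cardinality count, so that it suffices to produce one well-defined map that is onto; for (2) no such count is available a priori, so I must build an explicit two-sided inverse, which I would do by a Glaisher-type per-value correspondence applied to the parts divisible by $\Delta$.

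For (1) I would first define the insertion map $\iota$ on the target by $\iota(\mu,\nu)=\lambda$ with $\lambda_i=\mu_i+2\Delta\,\nu_i$, for $\mu\in\K_n[m-2k\Delta]$ and $\nu=(\nu_1\ge\nu_2\ge\cdots)\vdash k$. Then $\iota$ lands in $\K^c_n[m]$: since $\mu$ and $\nu$ are weakly decreasing, so is $\lambda$; and if $\lambda_i=\lambda_{i+1}$ then $\mu_i=\mu_{i+1}$ and $\nu_i=\nu_{i+1}$, whence $\mu_i\in\Delta\Z$ by properness of $\mu$, so $\lambda_i\in\Delta\Z$ and $\lambda\in\F_n$. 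Also $\nu\neq\emptyset$ forces $\lambda\notin\K_n$: at the first column with $\nu_i>0$ one gets $\lambda_i-\lambda_{i+1}\ge 2\Delta$, with equality only when $\mu_i=\mu_{i+1}\in\Delta\Z$ and hence $\lambda_i\in\Delta\Z$, so the reduced condition fails either way. Next $\iota$ is onto: any non-reduced $\lambda\in\F_n$ has, in a column where the reduced inequality is violated, a removable $\delta$ (the topmost $2\Delta$ blocks form one full period of the stacking pattern and their deletion preserves properness), so removing removable $\delta$'s repeatedly terminates at some $\mu\in\K_n$, and with $\nu_i$ the number removed from column $i$ one has $\lambda=\iota(\mu,\nu)$. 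Here $\nu$ is automatically a partition, since $\nu_i<\nu_{i+1}$ would give $\mu_i-\mu_{i+1}=(\lambda_i-\lambda_{i+1})+2\Delta(\nu_{i+1}-\nu_i)>2\Delta$, contradicting $\mu\in\K_n$. Thus $\iota$ is a well-defined surjection between finite sets whose cardinalities coincide by the block form \eqref{eqn:Fock decomposition} of Proposition \ref{Prop:Fock decomposition}; hence $\iota$ is a bijection and I set $\overline{\Psi^m_n}=\iota^{-1}$.

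For (2) I would use that in $\F_n$ every repeated nonzero part is a multiple of $\Delta$, so each $\lambda\in\F_n$ splits canonically into its parts not divisible by $\Delta$ (automatically distinct) and its parts divisible by $\Delta$, the latter forming $\Delta\rho$ for an arbitrary partition $\rho$. I would define $\overline{\Phi^m_n}$ by applying the classical Euler/Glaisher correspondence to $\rho$ value by value: if $t\Delta$ occurs with multiplicity $m_t$ in $\lambda$, write $m_t=r_t+2q_t$ with $r_t\in\{0,1\}$, keep $r_t$ copies of $t\Delta$, and record $q_t$ copies of $t$. The retained parts (all parts not divisible by $\Delta$, together with the values $t\Delta$ of multiplicity $r_t\le 1$) form a strict partition $\sigma\in\S$, and the recorded data form a partition $\nu\vdash k$ with $k=\sum_t t\,q_t$; since the deleted blocks number $\sum_t 2q_t\cdot t\Delta=2\Delta k$, we get $\sigma\in\S[m-2\Delta k]$. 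The inverse sends $(\sigma,\nu)$ to the partition whose multiplicity of $t\Delta$ is $r_t+2q_t$, where $r_t\le 1$ is the multiplicity of $t\Delta$ in $\sigma$ and $q_t$ that of $t$ in $\nu$, leaving the non-multiples of $\Delta$ untouched; this lands in $\F_n$ and is two-sided inverse to $\overline{\Phi^m_n}$ because $m_t\mapsto(r_t,q_t)$ is the bijection $\Z_{\ge 0}\cong\{0,1\}\times\Z_{\ge 0}$. Finally $\lambda\in\S^c_n$, i.e.\ $\lambda$ has a repeated nonzero part, if and only if some $q_t>0$, i.e.\ $\nu\neq\emptyset$ and $k>0$, which pins the image to the $k>0$ summands.

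The hard part is in (1): making the $\delta$-removal canonical and partition-valued. The weakly-decreasing argument settles this—the reduced bound $\mu_i-\mu_{i+1}\le 2\Delta$ forces the column-wise removal counts to decrease—and invoking Proposition \ref{Prop:Fock decomposition} lets me replace a direct injectivity/uniqueness proof by the easy surjectivity of $\iota$. In (2) there is no such cardinality crutch, so the genuine work is the explicit two-sided inverse; the feature that $\F_n$ confines all repetitions to multiples of $\Delta$ is precisely what lets Glaisher's per-value splitting run without interference. Taken together the two bijections show that $|\K_n[m]|=|\F_n[m]|-|\K^c_n[m]|$ and $|\S[m]|=|\F_n[m]|-|\S^c_n[m]|$ obey the same recursion in $m$, which is the $n$-independence identity the paper is after.
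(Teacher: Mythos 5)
Your two maps are, part for part, the same bijections as the paper's: in (1) your $\iota$ is exactly the paper's stated inverse $\lambda'_i=\overline{\lambda}'_i+2\widehat{\lambda}'_i\Delta$, and in (2) your per-value Glaisher splitting $m_t=r_t+2q_t$ is precisely what the paper's iterative algorithm $\mathbf{B}$ (repeatedly deleting a pair of equal parts $t\Delta$ and recording $t$) computes, with the same insertion inverse. The only genuine divergence is the logical route in (1): the paper runs a forward algorithm $\mathbf{A}$ and asserts, with the verifications left as ``one can check,'' that it is inverse to the insertion, whereas you prove that $\iota$ is a well-defined surjection onto $\K^{c}_n[m]$ and then get injectivity for free from the equicardinality supplied by \eqref{eqn:Fock decomposition}. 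That is legitimate --- the paper itself remarks that assertion (1) comes from \eqref{eqn:Fock decomposition} directly --- and it spares you checking that the algorithmic output $\widehat{\lambda}$ is a partition, at the cost of making (1) rest on the imported Proposition \ref{Prop:Fock decomposition} rather than being self-contained. Two small slips, neither fatal: the column at which $\iota(\mu,\nu)$ visibly fails the reduced condition is the \emph{last} one with $\nu_i>0$ (more precisely, one with $\nu_i>\nu_{i+1}$), not the first, since for $\nu_i=\nu_{i+1}>0$ the difference $\lambda_i-\lambda_{i+1}$ equals $\mu_i-\mu_{i+1}$ and need not reach $2\Delta$; and in your monotonicity argument for $\nu$, the case $\nu_{i+1}>\nu_i$ with $\lambda_i=\lambda_{i+1}$ yields only $\mu_i-\mu_{i+1}\ge 2\Delta$ rather than $>2\Delta$, but then $\lambda_i$, hence $\mu_i$, is a multiple of $\Delta$, so the equality clause of the reduced condition is violated instead and the contradiction survives.
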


\begin{proof}
$(1)$ Actually, the first assertion comes from $\eqref{eqn:Fock decomposition}$, directly. In this proof,
we will give an explicit $1-1$ and onto map between $\K^{c}_n[m]$ and 
$\displaystyle \bigsqcup_{ \substack{k > 0 \\ m-2k \Delta \ge 0}} 
(\K_{n}[m-2k \Delta]  \times  \{ \lambda \vdash k \} )$.
Let $\Psi^m_n$ be a map from $\K^{c}_n[m] $ to $\displaystyle \bigsqcup_{\substack{k > 0 \\ m-2k \Delta \ge 0}} \K_{n}[m-2k \Delta]$ by the following algorithm ${\mathbf A}$
\begin{itemize}
\item[(${\mathbf A}$1)] Let $\lambda \in \K^{c}_n[m]$ be given. Set $\lambda^{(0)}=\lambda$ and $l=0$.
\item[(${\mathbf A}$2)] Find maximal $i$ such that 
\begin{align} \label{condition 1}
\lambda^{(l)}_{i-1}-\lambda^{(l)}_i \ge 2 t \Delta  \text{ for some $t \in \Z_{>0}$ with equlaity hold only if } \lambda^{(l)}_{i-1}=k \Delta \text{ for some k.}
\end{align}
\item[(${\mathbf A}$3)] Among $t$'s satisfying the inequality in $\eqref{condition 1}$, choose the maximal one and say $t_l$.
Set 
$$\lambda^{(l+1)}:=(\lambda^{(l)}_1-2t_l \Delta ,\lambda^{(l)}_2-2t_l \Delta ,...,\lambda^{(l)}_{i-1}-2t_l \Delta ,\lambda^{(l)}_i,\lambda^{(l)}_{i+1},...). $$
\item[(${\mathbf A}$4)] If there is no $i$ such that 
\begin{align*} 
\lambda^{(l+1)}_{i-1}-\lambda^{(l+1)}_i \ge 2t \Delta   \text{ for some $t \in \Z_{>0}$ and
 equality holds for } \lambda^{(l+1)}_{i-1}=k \Delta \text{ for some k},
\end{align*}
define $\overline{\lambda}=\lambda^{(l+1)}$ and terminate algorithm, otherwise $l=l+1$ and go to $({\mathbf A}2)$.
\end{itemize}
Then this algorithm terminates in a finite step and one can check the following things:
\begin{itemize}
\item $k=\dfrac{|\lambda|-|\overline{\lambda}|}{2\Delta } \in \Z_{>0}$,
\item $\overline{\lambda} \in \K_n[m-2 k \Delta]$,
\item $\widehat{\lambda}_i:=\dfrac{\lambda_i-\overline{\lambda}_i}{2\Delta } \in \Z_{\ge 0}$,
\item $\widehat{\lambda}:=(\widehat{\lambda}_1,\widehat{\lambda}_2,\ldots) \vdash k$.
\end{itemize}
Thus we can get a function 
$$\overline{{\Psi}_n^m}: \ \K^{c}_n[m] \to \displaystyle \bigsqcup_{\substack{k > 0 \\ m-2k \Delta \ge 0}} 
(\K_{n}[m-2k \Delta]  \times  \{ \lambda \vdash k \} )$$
given by $\lambda \mapsto (\overline{\lambda},\widehat{\lambda})$. Then one can show that
$\overline{\Psi_n^m}$ is an $1-1$ and onto map with corresponding preimage of 
$(\overline{\lambda}',\widehat{\lambda}') \in \K_{n}[m-2k \Delta]  \times  \{ \lambda \vdash k \}$
given by
$$\lambda':=(\overline{\lambda}'_1+2 \widehat{\lambda}'_1 \Delta ,\overline{\lambda}'_2+2 \widehat{\lambda}'_2 \Delta ,...,\overline{\lambda}'_k+2 \widehat{\lambda}'_k \Delta ,..). $$

$(2)$ For a given partition $\lambda$, define the inserting $(\underbrace{k\Delta ,...,k\Delta}_{j})$ into $\lambda$ denoted by $(k\Delta)^j \hookrightarrow \lambda$ as follows:
\begin{itemize}
\item Find $i$ such that $\lambda_i \ge k\Delta  > \lambda_{i+1}$,
\item Set $(k\Delta)^j \hookrightarrow \lambda :=(\lambda_1,\lambda_2,...,\lambda_i,\underbrace{k\Delta ,...,k\Delta}_{j},\lambda_{i+1},..)$.
\end{itemize}

Let $\Phi^m_n$ be a map from $\S^{c}_n[m] $ to $\displaystyle \bigsqcup_{\substack{k > 0 \\ m-2k \Delta \ge 0}} \S[m-2k \Delta]$ by the following algorithm ${\mathbf B}$
\begin{itemize}
\item[(${\mathbf B}$1)] Let $\lambda \in \S^{c}_n[m]$ be given. Set $\lambda^{(0)}=\lambda$ and $l=0$.
\item[(${\mathbf B}$2)] Find maximal $i$ such that 
\begin{align*} 
\lambda^{(l)}_{i-1}=\lambda^{(l)}_i= \widehat{\lambda}_l \Delta \text{ for some $\widehat{\lambda}_l \in \Z_{>0}$.}
\end{align*}
\item[(${\mathbf B}$3)]Set 
$$\lambda^{(l+1)}:=(\lambda^{(l)}_1,\lambda^{(l)}_2,...,\lambda^{(l)}_{i-2},\lambda^{(l)}_{i+1},...). $$
\item[(${\mathbf B}$4)] If there is no $j$ such that 
\begin{align*} 
\lambda^{(l+1)}_{j-1}=\lambda^{(l+1)}_j= k \Delta \text{ for some $k \in \Z_{>0}$}, 
\end{align*}
define $\overline{\lambda}=\lambda^{(l+1)}$ and terminate algorithm, otherwise $l=l+1$ and go to $({\mathbf B}2)$.
\end{itemize}
Then this algorithm terminates in a finite step and one can check the following things:
\begin{itemize}
\item $k=\dfrac{|\lambda|-|\overline{\lambda}|}{2\Delta } \in \Z_{>0}$,
\item $\overline{\lambda} \in \S[m-2 k \Delta]$,
\item $\widehat{\lambda}:=(\widehat{\lambda}_l,\widehat{\lambda}_{l-1},...,\widehat{\lambda}_1) \vdash k$.
\end{itemize}
Thus we can get a function 
$$\overline{{\Phi}_n^m}: \ \S^{c}_n[m] \to \displaystyle \bigsqcup_{\substack{k > 0 \\ m-2k \Delta \ge 0}} 
(\S[m-2k \Delta]  \times  \{ \lambda \vdash k \} )$$
given by $\lambda \mapsto (\overline{\lambda},\widehat{\lambda})$. Then one can show that 
$\overline{\Phi_n^m}$ is an $1-1$ and onto map with corresponding preimage of 
$(\overline{\lambda}',\widehat{\lambda}') \in \S[m-2k \Delta]  \times  \{ \lambda \vdash k \}$
given by
$$\lambda':= (\widehat{\lambda}'_l \Delta)^2 \hookrightarrow (\widehat{\lambda}'_{l-1} \Delta)^2 \hookrightarrow \cdots (\widehat{\lambda}'_1 \Delta)^2 \hookrightarrow \overline{\lambda}'.$$
\end{proof}

\begin{corollary} \label{Cor:generalization of Euler theorem} For all $m \in \Z_{\ge 0}$ and $n \in \Z_{\ge 1}$
$$|\K_n[m]|=|\S[m]|.$$
\end{corollary}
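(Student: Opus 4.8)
The plan is to deduce the equality of cardinalities from the two bijections in Theorem \ref{Theorem: Alg A,B} by a single induction on $m$, using the fact that $\K_n$ and $\S$ both sit inside $\F_n$. First I would record that for each $m$ the finite set $\F_n[m]$ splits in two ways as a disjoint union,
\[
\F_n[m] = \K_n[m] \sqcup \K^{c}_n[m] = \S[m] \sqcup \S^{c}_n[m].
\]
Taking cardinalities and feeding in the bijections $\overline{\Psi_n^m}$ and $\overline{\Phi_n^m}$ — together with $|\{\lambda \vdash k\}| = \mathcal{P}(k)$ — yields the two expansions
\[
|\F_n[m]| = \sum_{\substack{k \ge 0 \\ m-2k\Delta \ge 0}} |\K_n[m-2k\Delta]|\,\mathcal{P}(k), \qquad |\F_n[m]| = \sum_{\substack{k \ge 0 \\ m-2k\Delta \ge 0}} |\S[m-2k\Delta]|\,\mathcal{P}(k),
\]
where the $k=0$ terms (with $\mathcal{P}(0)=1$) are precisely $|\K_n[m]|$ and $|\S[m]|$; splitting off the $k=0$ term is exactly where the $k>0$ constraint in the bijections is used. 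The first of these is just the Fock-space identity \eqref{eqn:Fock decomposition}, here re-derived combinatorially from the bijection.

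Next I would subtract the two expansions so that the common left-hand side $|\F_n[m]|$ cancels, leaving
\[
\sum_{\substack{k \ge 0 \\ m-2k\Delta \ge 0}} \bigl(|\K_n[m-2k\Delta]| - |\S[m-2k\Delta]|\bigr)\,\mathcal{P}(k) = 0
\]
for every $m$, and then argue by strong induction on $m$. For $m=0$ (and more generally for $m<2\Delta$) the only surviving term is $k=0$, forcing $|\K_n[m]| = |\S[m]|$; for $m=0$ both sides equal $1$. For the inductive step, every index $m-2k\Delta$ with $k\ge 1$ satisfies $m-2k\Delta < m$, so the induction hypothesis annihilates all terms with $k\ge 1$; since the $k=0$ coefficient is $\mathcal{P}(0)=1$, the identity collapses to $|\K_n[m]| - |\S[m]| = 0$.

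Essentially all the genuine work has already been done in Theorem \ref{Theorem: Alg A,B}: the content is that $|\K_n[\cdot]|$ and $|\S[\cdot]|$ satisfy the \emph{same} recursion, with the same weights $\mathcal{P}(k)$, the same input $|\F_n[m]|$, and a unit leading coefficient, which rigidly determines both sequences. I therefore do not expect any real obstacle in this corollary; the only point requiring care is the bookkeeping of the summation ranges — that $\Delta = n+1$ is the common period in both recursions and that the separated $k=0$ term matches the $k>0$ indexing of the two bijections. Once the two recursions are written side by side, the induction is immediate and manifestly independent of $n$, which is precisely the $n$-independence emphasized in the introduction.
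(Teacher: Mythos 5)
Your proposal is correct and follows essentially the same route as the paper: both arguments use the decomposition of $\F_n[m]$ into $\K_n[m]\sqcup\K_n^c[m]$ and $\S[m]\sqcup\S_n^c[m]$, feed in the two bijections of Theorem \ref{Theorem: Alg A,B} to see that $|\K_n[\cdot]|$ and $|\S[\cdot]|$ satisfy the same recursion with weights $\mathcal{P}(k)$, and conclude by induction on $m$. Your write-up is somewhat more explicit than the paper's (which handles the base case $0\le t\le 2\Delta$ by direct inspection rather than by noting that only the $k=0$ term survives), but the underlying idea is identical.
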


\begin{proof}
One can show that $\S[t]=\K_n[t]$ for $0 \le t \le 2 \Delta$. The Theorem \ref{Theorem: Alg A,B} tells
us that for all $m > 2\Delta$, $|\S_n^c[m]|$ depends on the set of strict partitions of $k \in \Z_{\ge 0}$ and
$\mathcal{P}(k)$ such that $\dfrac{m-k}{2\Delta} \in \Z_{>0}$. Then by using induction on $m$, we can conclude that 
$|\S_n^c[m]|=|\K_n^c[m]|$.
Hence $$|\S[m]|=|\K_n[m]|.$$
\end{proof}
\begin{example} \ 
\begin{itemize}
\item For $n=2$, the reduced Young walls of $D_3^{(2)}$ with $8$-blocks are listed below:
$$
{\xy (-6,-10)*++{\Tbone};(0,2)*++{\Tbseven};\endxy}
{\xy (-6,-7)*++{\Tbtwo};(0,0.5)*++{\Tbsix};\endxy}
{\xy (-6,-5.5)*++{\Tbthree};(0,-1)*++{\Tbfive};\endxy}
{\xy (-12,-10)*++{\Tbone};(-6,-7)*++{\Tbtwo};(-0,-1)*++{\Tbfive};\endxy}
{\xy (-12,-10)*++{\Tbone};(-6,-5.5)*++{\Tbthree};(-0,-4)*++{\Tbfour};\endxy}
{\xy (-12,-7)*++{\Tbtwo};(-6,-5.5)*++{\Tbthree};(0,-5.5)*++{\Tbthree};\endxy}.
$$
\item For $n=3$, the reduced Young walls of $D_4^{(2)}$ with $8$-blocks are listed below:
$$
{\xy (-6,-10)*++{\Fbone};(0,5)*++{\Fbseven};\endxy}
{\xy (-6,-7)*++{\Fbtwo};(0,2)*++{\Fbsix};\endxy}
{\xy (-6,-4)*++{\Fbthree};(0,-1)*++{\Fbfive};\endxy}
{\xy (-12,-10)*++{\Fbone};(-6,-7)*++{\Fbtwo};(-0,-1)*++{\Fbfive};\endxy}
{\xy (-12,-10)*++{\Fbone};(-6,-4)*++{\Fbthree};(-0,-2.5)*++{\Fbfour};\endxy}
{\xy (-6,-2.5)*++{\Fbfour};(-0,-2.5)*++{\Fbfour};\endxy}.
$$
\end{itemize}
In this way, we can see that the number of reduced Young walls with $8$ blocks is $|\S[8]|=6$, even though $n$ is different. Note that the half-height blocks in the bottom forms the ground-state Young wall $Y_{\Lambda_0}$. 
\end{example}

Hence, for any $n \in \Z_{\ge 1}$, $\K_n$ has the same cardinality as the set of strict partitions and odd 
partitions. Moreover, combining Corollary \ref{Cor:generalization of Euler theorem} with equations $\eqref{eqn:ch in YW}$, $\eqref{eqn:Fock decomposition}$ and $\eqref{eqn:chracter of S_n}$, we can conclude that  

\begin{corollary} For all $m \in \Z_{> 0}$ and $n \in \Z_{\ge 2}$,
\begin{enumerate}
\item $\chi_n^{\Lambda_0}(t)$ is a generating function of strict partitions; i.e.,
$$ \chi_n^{\Lambda_0}(t) =\prod_{i=1}^{\infty} (1+t^i).$$
\item The number of Young walls in the set $\F_n[m]$ does not depend on $n$ and is given as follows:
$$ |\F_n[m]| = \displaystyle \sum_{\substack{k \ge 0 \\ m-2 k \Delta \ge 0}} 
(|\S[m-2k\Delta] | \times \mathcal{P}(k)).$$
\end{enumerate}
\end{corollary}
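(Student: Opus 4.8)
The plan is to obtain both assertions as immediate consequences of Corollary~\ref{Cor:generalization of Euler theorem}, whose content is the $n$-independent equality $|\K_n[m]|=|\S[m]|$, by feeding it into the two character-level formulas already recorded: the principally specialized character \eqref{eqn:ch in YW} and the Fock-space decomposition \eqref{eqn:Fock decomposition}. No genuinely new combinatorics is required; the substantive work has been absorbed into Theorem~\ref{Theorem: Alg A,B} and its corollary, so the task here is careful bookkeeping together with one classical product identity.

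For part (1), I would start from the right-hand identity in \eqref{eqn:ch in YW}, $\chi_n^{\Lambda_0}(t)=\sum_m |\K_n[m]|\,t^m$, and rewrite each coefficient via $|\K_n[m]|=|\S[m]|$ to get $\chi_n^{\Lambda_0}(t)=\sum_m |\S[m]|\,t^m$. Equivalently, one may specialize the virtual character $\vch_n(\S_n)$ of \eqref{eqn:chracter of S_n} at $e(\Lambda_0)=1$ and $e(-\alpha_i)=t$, using that $\het(\Lambda_0-\mu)$ records the number of blocks. It then remains to identify $\sum_m|\S[m]|\,t^m$ with the infinite product. The one combinatorial input is that a strict partition is nothing but a finite set of pairwise distinct positive integers, so each part size $i$ contributes an independent factor $(1+t^i)$ (the part is either absent or present exactly once); multiplying these gives $\prod_{i=1}^\infty(1+t^i)$, as claimed.

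For part (2), I would begin from \eqref{eqn:Fock decomposition}, which with $\Delta=n+1$ states $|\F_n[m]|=\sum_{k\ge 0,\,m-2k\Delta\ge 0}\bigl(|\K_n[m-2k\Delta]|\times\mathcal{P}(k)\bigr)$, and substitute $|\K_n[m-2k\Delta]|=|\S[m-2k\Delta]|$ to reach the displayed expression. The point I would emphasize is precisely the effect of the Corollary: after the substitution every constituent count $|\S[m-2k\Delta]|$, as well as each partition multiplicity $\mathcal{P}(k)$, is manifestly free of $n$, so the $n$-dependence has been removed from the summands and survives only through the step $\Delta=n+1$ governing the arithmetic progression $m,\,m-2\Delta,\,m-4\Delta,\ldots$ over which $k$ runs; this is the place to be careful when comparing different values of $n$.

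The main, and quite mild, obstacle is the product-series identification in part (1); the rest is a direct substitution of the already-established equality $|\K_n[m]|=|\S[m]|$. As a final check I would verify the boundary range $0\le m\le 2\Delta$, where only the $k=0$ term contributes and $\S[m]=\K_n[m]$ holds exactly, to confirm that both formulas behave correctly at the small values of $m$ that anchor the induction underlying Corollary~\ref{Cor:generalization of Euler theorem}.
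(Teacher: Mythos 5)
Your proposal is correct and follows exactly the route the paper takes: it deduces part (1) by substituting $|\K_n[m]|=|\S[m]|$ from Corollary~\ref{Cor:generalization of Euler theorem} into \eqref{eqn:ch in YW} together with the classical product formula for strict partitions, and part (2) by the same substitution into the Fock-space count \eqref{eqn:Fock decomposition}. Nothing is missing.
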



\begin{remark}
For an affine type $A^{(1)}_n$, the combinatorial realizations of crystal bases of level 1 are well-known as $n+1$-reduced colored Young diagrams. One can check that the set of $2$-reduced Young diagrams is identified with the set of strict partitions by transposing diagrams. Hence the principal specialized character of irreducible modules of level 1 over $A^{(1)}_1$ satisfies the Euler's partition identity:
\begin{equation} \label{eqn:Pch A_1^{(1)}}
\chi_{A^{(1)}_1}^{\Lambda}(t)=\prod_{i=1}^{\infty} (1+t^i)= \prod_{i=1}^{\infty} \dfrac{1}{1-t^{2i-1}}.
\end{equation}
The Dynkin diagram of $A^{(1)}_n$ and $D_{n+1}^{(2)}$ are given as follows: 
\begin{align*} 
\DynkinA \hspace{4.6em} & \text{ for } A^{(1)}_1 ,\\
\DynkinDt \hspace{3.25em} & \text{ for } D^{(2)}_3 , \\
\DynkinDf \hspace{2.03em} & \text{ for } D^{(2)}_4 , \\
\DynkinDn & \text{ for } D_{n+1}^{(2)} \ (n \ge 4).
\end{align*}
Thus our result can be interpreted as generalization of the Euler's partition identity in the sense that
the leftmost term in equation $\eqref{eqn:Pch A_1^{(1)}}$ can be replaced to the 
$\chi_{D^{(2)}_{n+1}}^{\Lambda_0}(t)$, for all $n \in \Z_{\ge 2}$.
\end{remark}

From now on, we show that the equality in Corollary \ref{Cor:generalization of Euler theorem} can be interpreted 
in more stronger sense. Then we can explain the reason why we have Conjecture \ref{Conjecture: crystal structure on Strict partition}. 

In Theorem \ref{Theorem: Alg A,B}, we defined the maps ${\Phi}^m_n$ and ${\Psi}^m_n$. By their constructions, we can observe that for $\lambda \in \F_n[m]$,
\begin{equation} \label{eqn:wt in map}
\wt({\Psi}_n^m(\lambda))=\wt({\Phi}_n^m(\lambda))=\wt(\lambda) + 2k(\sum_{n=0}^n \alpha_{i}) ,\text{ for some } k \in \Z_{\ge 0}. 
\end{equation}

\begin{theorem} \label{Thm:weights of strict partitions} For all $m \in \Z_{\ge 0}$ and $n \in \Z_{\ge 2}$,
$$ \vch_n(\S[m])=\vch_n(\K_n[m]). $$
\end{theorem}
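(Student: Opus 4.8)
The plan is to prove the equivalent pointwise statement, that $|\S[m][\mu]| = |\K_n[m][\mu]|$ for every weight $\mu$, by strong induction on $m$. The driving observation is that both $\S$ and $\K_n$ sit inside the common ambient set $\F_n$, so for each $\mu$ the weight space $\F_n[m][\mu]$ splits two ways, into reduced walls and their complement, and into strict walls and their complement:
\begin{equation*}
|\F_n[m][\mu]| = |\K_n[m][\mu]| + |\K^c_n[m][\mu]| = |\S[m][\mu]| + |\S^c_n[m][\mu]|.
\end{equation*}
Subtracting, it suffices to establish the complement identity $|\K^c_n[m][\mu]| = |\S^c_n[m][\mu]|$ for all $\mu$; the desired $|\K_n[m][\mu]| = |\S[m][\mu]|$ then follows at once.

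Next I would upgrade the bijections $\overline{\Psi^m_n}$ and $\overline{\Phi^m_n}$ of Theorem \ref{Theorem: Alg A,B} to weight-refined bijections by invoking \eqref{eqn:wt in map}. Indeed, if $\overline{\Psi^m_n}(\lambda) = (\overline{\lambda}, \widehat{\lambda})$ with $\overline{\lambda} \in \K_n[m-2k\Delta]$ and $\widehat{\lambda} \vdash k$, then \eqref{eqn:wt in map} forces $\wt(\overline{\lambda}) = \wt(\lambda) + 2k(\sum_{i=0}^n \alpha_i)$, a shift that depends only on $k$ and not on the partition $\widehat{\lambda}$. Consequently $\overline{\Psi^m_n}$ restricts, for each fixed $\mu$, to a bijection
\begin{equation*}
\K^c_n[m][\mu] \ \xrightarrow{\ \sim\ } \ \bigsqcup_{\substack{k>0 \\ m-2k\Delta \ge 0}} \Big( \K_n[m-2k\Delta]\big[\mu + 2k\textstyle\sum_{i=0}^n\alpha_i\big] \times \{\lambda \vdash k\}\Big),
\end{equation*}
and, since the factor $\{\lambda \vdash k\}$ is weight-neutral and contributes $\mathcal{P}(k)$, taking cardinalities gives
\begin{equation*}
|\K^c_n[m][\mu]| = \sum_{\substack{k>0 \\ m-2k\Delta \ge 0}} \mathcal{P}(k)\, \big|\K_n[m-2k\Delta]\big[\mu + 2k\textstyle\sum_{i=0}^n\alpha_i\big]\big|.
\end{equation*}
The identical reasoning applied to $\overline{\Phi^m_n}$ produces the same formula with every $\K_n$ replaced by $\S$.

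Finally I would close the induction. When $m < 2\Delta$ the index set in both formulas is empty, so $|\K^c_n[m][\mu]| = 0 = |\S^c_n[m][\mu]|$, which settles the base case. When $m \ge 2\Delta$, every surviving term is indexed by $m - 2k\Delta < m$ with $k>0$, so the induction hypothesis $|\K_n[m'][\nu]| = |\S[m'][\nu]|$, applied at $m' = m-2k\Delta$ and $\nu = \mu + 2k\sum_{i=0}^n\alpha_i$, matches the two sums term by term; hence $|\K^c_n[m][\mu]| = |\S^c_n[m][\mu]|$, and the complement identity completes the step. The one genuinely delicate point is the weight-refinement of the second paragraph: one must be certain that the single integer $k$ simultaneously governs both the drop in block number, $m \mapsto m-2k\Delta$, and the weight shift, $\mu \mapsto \mu + 2k\sum_i\alpha_i$, so that the disjoint-union decomposition respects weight spaces rather than merely total cardinalities. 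This is exactly the content of \eqref{eqn:wt in map}, which reflects that each removed $\delta$ deletes precisely two blocks of every color; granting it, the remainder is bookkeeping and induction, and the argument strengthens Corollary \ref{Cor:generalization of Euler theorem} from equal cardinalities to equal virtual characters.
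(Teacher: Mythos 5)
Your proof is correct and takes essentially the same route as the paper's: weight-refine the bijections of Theorem \ref{Theorem: Alg A,B} via \eqref{eqn:wt in map} (each deleted $\delta$ removes two blocks of every color, so the block-count drop $2k\Delta$ and the weight shift $2k\sum_{i=0}^n\alpha_i$ are governed by the same $k$) and then induct on $m$. The only, cosmetic, difference is that the paper checks the case $m=2\Delta$ by hand, matching the exceptional elements $(2\Delta,0,\ldots)\in\S$ and $(\Delta,\Delta,0,\ldots)\in\K_n$ of equal weight, whereas your induction step absorbs that case automatically through the $k=1$ term of the complement formula.
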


\begin{proof}
By the definition, $\vch(\S[t])=\vch(\K_n[t])$ for $0 \le t < 2 \Delta$. For $t= 2\Delta$, one can easily check 
that $\S[t] \setminus \{ (2\Delta,0,\ldots) \} = \K_n[t] \setminus \{ (\Delta,\Delta,0,\ldots) \}$. Hence
the equation holds for $0 \le t \le 2\Delta$. From Theorem \ref{Theorem: Alg A,B} and the equation 
$\eqref{eqn:wt in map}$, for $m>2\Delta$, the $\vch(\S^c_n[m])$ depends on $k \in \Z_{\ge 0}$, the set of strict 
partitions of $k$ and $\mathcal{P}(k)$ such that $\dfrac{m-k}{2\Delta} \in \Z_{\ge 0}$. As in the similar way
of proof of Corollary \ref{Cor:generalization of Euler theorem}, 
$ \vch(\S^c_n[m])= \vch(\K^c_n[m])$ and hence our assertion holds.
\end{proof}

Thus, we conclude the following:

\begin{corollary} \label{Cor:character of S} For all $n \in \Z_{\ge 2}$,
$$\chi_n^{\Lambda_0}=\vch_n(\S).$$
\end{corollary}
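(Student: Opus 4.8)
The plan is to deduce this directly from Theorem \ref{Thm:weights of strict partitions} by summing the block-graded identity over the number of blocks. The natural starting point is the identity $\chi_n^{\Lambda_0} = \vch_n(\K_n) = \sum_{\mu \in P} |\K_n[\mu]| e(\mu)$ already recorded in \eqref{eqn:ch in YW}, together with the definition $\vch_n(\S) = \sum_{\mu \in P} |\S[\mu]| e(\mu)$ from \eqref{eqn:chracter of S_n}. Thus it suffices to establish the weight-wise equality $|\K_n[\mu]| = |\S[\mu]|$ for every $\mu \in P$, and Theorem \ref{Thm:weights of strict partitions} supplies exactly this information once organized correctly.

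First I would record that the weight grading of $\F_n$ is compatible with the grading by number of blocks. Indeed, from the weight formula \eqref{eqn:weight on partitions}, if $\wt(Y) = \mu = \Lambda_0 - \sum_{i \in I} a_i \alpha_i$, then each block of color $i$ contributes $-\alpha_i$, so the total number of blocks of $Y$ equals $\sum_{i \in I} a_i = \het(\Lambda_0 - \mu)$. Hence the block count $m$ is a function of the weight $\mu$ alone, and for $m = \het(\Lambda_0 - \mu)$ one has $\K_n[\mu] = \K_n[m][\mu]$ and $\S[\mu] = \S[m][\mu]$. This is precisely what allows the block-graded equalities to be reassembled into a single weight-graded identity without any overlap. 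With this in hand I would compute
$$\chi_n^{\Lambda_0} = \vch_n(\K_n) = \sum_{m \ge 0} \sum_{\substack{\mu \in P \\ \het(\Lambda_0 - \mu) = m}} |\K_n[m][\mu]| e(\mu) = \sum_{m \ge 0} \vch_n(\K_n[m]),$$
apply Theorem \ref{Thm:weights of strict partitions} to replace each $\vch_n(\K_n[m])$ by $\vch_n(\S[m])$, and then run the same regrouping in reverse to obtain
$$\sum_{m \ge 0} \vch_n(\S[m]) = \sum_{\mu \in P} |\S[\mu]| e(\mu) = \vch_n(\S),$$
which is the desired equality.

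The substantive content is entirely carried by Theorem \ref{Thm:weights of strict partitions}, so I do not expect a genuine obstacle at this stage; the corollary is essentially a formal consequence obtained by summing over $m$. The only point requiring care is the bookkeeping in the grading-compatibility step, namely confirming that $m$ is determined by $\mu$ via $m = \het(\Lambda_0 - \mu)$ so that the block-wise identity of virtual characters can be summed into a single weight-wise identity with no double counting. Once that observation is pinned down, the rest is a routine interchange of summations and an appeal to the preceding theorem.
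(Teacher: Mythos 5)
Your argument is correct and matches what the paper intends: the corollary is stated as an immediate consequence of Theorem \ref{Thm:weights of strict partitions} together with \eqref{eqn:ch in YW}, obtained by summing the block-graded identity over $m$. Your additional observation that $m=\het(\Lambda_0-\mu)$ is determined by $\mu$ (so the regrouping involves no double counting) is exactly the right bookkeeping point, which the paper leaves implicit.
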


From Theorem \ref{Thm:weights of strict partitions} and Corollary \ref{Cor:character of S}, one may conjecture that there are crystal structures of $B_n(\Lambda_0)$ on the set of strict partitions $\S$ for all $n \in \Z_{\ge 2}$. For example, our conjectured crystal structure on $\S$ of type $D^{(2)}_3$ is given by follows:

\begin{example} \label{Exa:GraphF} 
$$\GraphF$$
\end{example}

\bibliographystyle{amsplain}

\begin{thebibliography}{99}
\bibitem{An66}
G. E.  Andrews, {\it On  generalizations  of  Euler¡¯s  partition  theorem},  Mich.  Math.  J.  
{\bf 13} (1966)  491--498. 


\bibitem{HK02}
J. Hong, S.-J. Kang, {\it Introduction to Quantum Groups and Crystal
Bases}, Grad. Stud. Mathematics {\bf 42}, Amer. Math. Soc., 2002.

\bibitem{HKL04}
Jin Hong, Seok-Jin Kang, Hyeonmi Lee, {\it Young wall realization of crystal graphs
for $U_q(C^{(1)}_n)$}, . Comm. Math. Phys {\bf 244} (2004), 111--131.

\bibitem{K03}
S.-J. Kang, {\it Crystal bases for quantum affine algebras and combinatorics of Young walls}, Proc. London Math.
Soc. (3) {\bf 86} (2003), no.1, 29--69 .

\bibitem{KK04}
S-J. Kang, J-H. Kwon, {\it Crystal bases of the Fock space representations and string functions}, J. Algebra {\bf 280} (2004), 313--349

\bibitem{Kash90} M. Kashiwara,
{\it Crystalizing the $q$-analogue of universal enveloping algebras},
Commun. Math. Phys. {\bf 133} (1990), 249--260.

\bibitem{Kash91} \bysame, {\it On crystal bases of the $q$-analogue of universal enveloping
algebras}, Duke Math. J. {\bf 63} (1991), 465--516.

\bibitem{Kash93}
\bysame, \emph{Global crystal bases of quantum groups}, Duke Math. J.
\textbf{69} (1993), no.~2, 455--485.

\bibitem{Lus93}
G.~Lusztig, \emph{Introduction to {Q}uantum {G}roups}, Progress in Mathematics,
  110, Birkh\"{a}user Boston, Inc., Boston, MA, 1993.

\bibitem{NY94} T. Nakajima, H.-F. Yamada, \emph{Basic representations of $A^{(2)}_{2l}$ and $D^(2)_{l+1}$ and the polynomial solutions to the reduced BKP hierarchies}, J. Phys. A {\bf 27} (6) (1994) 171--176.
  
\end{thebibliography}


\end{document}